\documentclass[10pt, reqno]{amsart}
\usepackage[dvipsnames]{xcolor}
\usepackage{amsmath}
\usepackage{amsfonts}
\usepackage{amssymb, cite}
\usepackage{amsthm}
\usepackage{tikz}
\usepackage[sc]{mathpazo}
\usepackage{bm}
\usepackage{graphicx}

\usepackage{mathtools}
   \mathtoolsset{showonlyrefs}

\usepackage{algorithm}
\usepackage{algpseudocode}

\addtolength{\textheight}{15pt}


\usepackage{enumitem}
\setenumerate{itemsep=5pt} 
\setitemize{itemsep=5pt} 

\usepackage[font=small,labelfont=bf]{caption}

\theoremstyle{plain}
\newtheorem{theorem}{Theorem}
\numberwithin{theorem}{section}
\newtheorem{proposition}[theorem]{Proposition}
\newtheorem{lemma}[theorem]{Lemma}
\newtheorem{corollary}[theorem]{Corollary}
\newtheorem*{conjecture}{Conjecture}
\theoremstyle{definition}

\newtheorem{problem}[theorem]{Problem}

\newtheorem{remark}[theorem]{Remark}

\numberwithin{equation}{section}

\newcommand{\Z}{\mathbb{Z}}

\newcommand{\M}{\mathrm{M}}

\newcommand{\N}{\mathbb{N}}
\newcommand{\Q}{\mathbb{Q}}

\newcommand{\s}{\mathfrak{s}}

\newcommand{\T}{\mathsf{T}}

\newcommand{\PPM}{\scriptstyle+ + -}

\newcommand{\mdim}{\operatorname{dim}_{\mathrm{mat}}}
\renewcommand{\pmod}[1]{\,\,(\operatorname{mod} #1)}

\newcommand{\LL}{\mathcal{L}}
\renewcommand{\S}{\mathcal{S}}

\newcommand{\tr}{\operatorname{tr}}
\newcommand{\lcm}{\operatorname{lcm}}

\newcommand{\ceiling}[1]{\lceil #1 \rceil}

\newcommand{\tinymatrix}[4]{\big[ \begin{smallmatrix} #1 & #2 \\ #3 & #4 \end{smallmatrix} \big]}
\newcommand{\minimatrix}[4]{ \begin{bmatrix} #1 & #2 \\ #3 & #4 \end{bmatrix} }

\newcommand{\semigroup}[1]{\langle #1 \rangle}

\allowdisplaybreaks
\usepackage{hyperref}

\begin{document}

\title[Numerical semigroups from rational matrices III]{Numerical semigroups from rational matrices III:  semigroups of matricial dimension two and a counterexample to the lonely element conjecture}
\author[A.~Chhabra]{Arsh Chhabra}
\address{Operations Research \& Financial Engineering, Princeton University, Sherrerd Hall, Charlton Street, Princeton, NJ 08544, USA}
\email{ac3019@princeton.edu}

\author[S.R.~Garcia]{Stephan Ramon Garcia}
\address{Department of Mathematics and Statistics, Pomona College, 610 N. College Ave., Claremont, CA 91711, USA}
\email{stephan.garcia@pomona.edu}
\urladdr{\url{https://stephangarcia.sites.pomona.edu/}}

\thanks{SRG was partially supported by NSF grant DMS-2054002.}

\keywords{semigroup; numerical semigroup; Lucas sequence; $p$-adic valuation; prime.}
\subjclass[2000]{20M14,05E40}

\maketitle

\begin{abstract}
We characterize semigroups in $\{0,1,2,\ldots\}$ of matricial dimension $2$ and produce a counterexample to the conjecture that a numerical semigroup whose small elements are lonely has matricial dimension at most $2$.
\end{abstract}

\section{Introduction}

Our aim in this paper is twofold.  First we characterize semigroups of matricial dimension $2$ \cite{NSRM1,NSRM2}, an endevaor that requires an analysis of the $p$-adic valuations of Lucas sequences \cite{BallotPaper, BallotBook}.  We use this knowledge
to provide a counterexample to the lonely element conjecture. In particular, we prove that there is a semigroup of matricial dimension
greater than $2$ that does not contain any non-lonely small elements.  We review the necessary terminology and notation below.

In this paper, the term \emph{semigroup} refers to an additive subsemigroup of $\N = \{0,1,2,\ldots\}$
(to the delight of logicians and the consternation of number theorists, we include $0$ as a natural number).
A \emph{numerical semigroup} is a semigroup in $\N$ with finite complement \cite{Assi, Rosales}.  The notation
$\semigroup{\cdot}$ denotes the semigroup generated by the indicated set.  For example, 
$\semigroup{2,7} = \{0,2,4,6,7,8,9,10\ldots\}$. 
The largest natural number not in $S$ is the \emph{Frobenius number} $g(S)$ of $S$.
Each numerical semigroup $S$ has a unique minimal system of \emph{generators} 
$1 \leq n_1 < n_2 < \cdots < n_k$ 
such that $\gcd(n_1,n_2,\ldots,n_k) = 1$ and
$S = \semigroup{n_1,n_2,\ldots,n_k}$ \cite[Thm.~2.7]{Rosales}.
Here $e(S) = k$ is the \emph{embedding dimension} of $S$ and $m(S) = n_1$ is the \emph{multiplicity} of $S$.
A nonzero element of a numerical semigroup $S$ is \emph{small} if it is less than the Frobenius number $g(S)$.
The set of nonzero small elements is
\begin{equation}\label{eq:SmallS}
    \s(S)=\{n\in S \setminus \{0\} :  n<  g(S)\}.
\end{equation}
We say $n \in \s(S)$ is \emph{lonely} if $n-1$ and $n+1$ do not belong to $S$.

A semigroup of the form
$\semigroup{m} = m\N = \{0,m,2m,3m,\ldots\}$ is a \emph{cyclic} semigroup.  In particular,
the \emph{trivial} semigroups $\{0\} = \semigroup{0}$ and $\N = \semigroup{1}$ are cyclic.
An \emph{ordinary} semigroup is a numerical semigroup of the form
\begin{equation}\label{eq:TailDefinition}
S_m = \{0,m,m+1,m+2,\ldots\}.
\end{equation}
We may refer to such a semigroup as a \emph{tail} (semigroup).
A \emph{scaled tail} is a semigroup of the form
$S_m(a;n) =\{ x \in S_m : x \equiv a \pmod{n} \}$. If $a=0$, we write 
\begin{equation}\label{eq:ModularTail}
S_m(n) =\semigroup{n}\cap S_m.
\end{equation}

In what follows, $\Q$ denotes the set of rational numbers and $\Z$ the set of integers.
The \emph{exponent semigroup} associated to $A \in \M_d(\Q)$ is
\begin{equation*}
\S(A) =  \{n \in \N : A^n \in \M_d(\Z)\}.
\end{equation*}
It is an additive subsemigroup of $\N$: it contains $0$ and is closed under addition.
If $S$ is a numerical semigroup, then $S = \S(A)$ for some $A \in \M_{m(S)}(\Q)$ \cite[Thm.~1.1]{NSRM2}.
This improved upon a result in \cite[Thm.~6.2]{NSRM1}, in which the $g(S) + 1$ replaced $m(S)$.
These results extend to non-numerical semigroups in $\N$ \cite[Cor.~3.6]{NSRM2}.  

The \emph{matricial dimension} $\mdim S$ of a semigroup $S \subseteq \N$ is the smallest $d \geq 1$
such that $S = \S(A)$ for some $A \in \M_d(\Q)$.  Thus, $\mdim S \leq m(S)$ for a numerical semigroup
and, more generally, $\mdim S \leq \min (S \setminus \{0\})$ \cite[Cor.~3.6]{NSRM2}.  One can show, for example, 
that $\mdim \semigroup{6,9,20} = 6$.  The exact computation of the matricial dimension of an arbitrary semigroup in $\N$ remains an open problem.

A semigroup is \emph{dimension-$d$ realizable} if it is the exponent semigroup of a $d \times d$ rational matrix.
One can see that a semigroup is dimension-$d$ realizable if and only if its matricial dimension is at most $d$.
The only dimension-$1$ realizable semigroups are $\{0\}$ and $\N$ \cite[Prop.~2.1]{NSRM1}, so a semigroup is dimension-$2$ realizable if and only if it is trivial or its matricial dimension is $2$.

\subsection*{Main results}
Lucas semigroups, introduced in Section \ref{Section:LucasSemigroup}, are the key to understanding dimension-$2$ realizable semigroups.  Using divisibility properties of Lucas sequences, split along the traditional cases familiar to specialists \cite{BallotPaper, BallotBook}, we describe all such semigroups in Theorems \ref{Theorem:PQZero}, \ref{Theorem:RegPrimeCyclicLocal}, \ref{Theorem:LocalSpecialLucasSGCase1Classifica}, \ref{Theorem:LocalSpecialLucasSGCase2Classifica}, and \ref{Theorem:LocalSpecialLucasSGCase3Classifica}.

Theorem \ref{Theorem:GlobalIff} characterizes the dimension-$2$ realizable semigroups: they are precisely the Lucas semigroups.
Theorem \ref{Theorem:NotGlobal} asserts that there exists a nonlocal Lucas semigroup; that is, a Lucas semigroup whose structure is determined by multiple primes, while
Theorem \ref{Theorem:SmallSet} harnesses the classification of Lucas semigroups to provide a new method for proving a semigroup has matricial dimension greater than $2$.  This is enough to solve an open problem:

The lonely element conjecture asserted that a numerical semigroup whose nonzero small elements are all lonely has matricial dimension at most $2$ (the converse is \cite[Thm.~5.1]{NSRM1}).
Theorem \ref{Theorem:Plus} provides a counterexample the conjecture.

\subsection*{Organization}
This paper is organized as follows. 
Section \ref{Section:LucasSemigroup} introduces Lucas semigroups and their localizations.  
Sections \ref{Section:Local1}, \ref{Section:Local2}, and \ref{Section:Local3} concern the classification of local Lucas semigroups.
The material in Section \ref{Section:Realizable} characterizes dimension-$2$ realizable semigroups:
they are precisely the Lucas semigroups.
Section \ref{Section:Applications} contains several applications and a counterexample to the lonely element conjecture.
We conclude in Section \ref{Section:Open} with several open problems and conjectures.

\subsection*{Acknowledgments}
We thank Christopher O'Neill for many helpful remarks and spotting some important simplifications in an earlier draft of this paper.

\section{Lucas semigroups}\label{Section:LucasSemigroup}
In this section, we define certain additive subsemigroups of $\N = \{0,1,2,\ldots\}$.
related to Lucas sequences.  Although it will take us some time to get there, we will ultimately describe all such Lucas semigroups and prove that a semigroup is dimension-$2$ realizable if and only if it is Lucas  (Theorem \ref{Theorem:GlobalIff}).

\subsection{Lucas sequences}\label{Subsection:Lucas}
The study of semigroups of matricial dimension $2$ leads to the consideration of certain relatives of the Fibonacci numbers.
For $P,Q \in \Z$, define the corresponding (fundamental) \emph{Lucas sequence} 
$U_n = U_n(P,Q)$ by
\begin{equation}\label{eq:Lucas}
U_{n+2} =  P U_{n+1} - Q U_n, \quad U_0 = 0, \quad U_1 = 1.
\end{equation}
If $Q = 0$, then $U_n = P^{n-1}$ for $n \geq 1$.
If $P = 0$, then
\begin{equation}\label{eq:FlipFlop}
U_n = 
\begin{cases}
0 & \text{if $n$ is even},\\
(-Q)^{ \lceil n/2 \rceil } & \text{if $n$ is odd}.
\end{cases}
\end{equation}
The \emph{companion sequence} $V_n = V_n(P,Q)$ is defined by 
$V_{n+2} =  P V_{n+1} - Q V_n$, $V_0 = 2$ and $V_1 = P$.
Both $U_n$ and $V_n$ are integer sequences. 
Let $\alpha,\beta$ denote the zeros of $x^2 - Px + Q$, so $P=\alpha + \beta$ and $Q = \alpha \beta$.
If $\alpha \neq \beta$, then
\begin{equation*}
U_n = \frac{\alpha^n - \beta^n}{\alpha - \beta} 
\quad \text{and} \quad
V_n = \alpha^n+ \beta^n.
\end{equation*}
If $\alpha = \beta$, then they both equal $P/2$, so $U_n = n \alpha^{-1}$ and $V_n  = 2 \alpha^n$ \cite[(2.6)]{BallotBook}.

In what follows, $p$ denotes a prime number, even when this is not explicitly stated.
A Lucas sequence $U_n$ is \emph{regular} if $\gcd(P,Q) = 1$.
A prime $p$ is \emph{regular} if $p \nmid \gcd(P,Q)$; otherwise it is \emph{special}.

Fix $P,Q \in \Z$ and let $R \in \Q$.  The associated \emph{Lucas semigroup} is
\begin{equation}\label{eq:LucasSemigroup}
\LL(P,Q,R) = \{ n \in \N : U_n R \in \Z\},
\end{equation}
in which $U_n = U_n(P,Q)$ is the Lucas sequence \eqref{eq:Lucas}.  
Note that $\LL(P,Q,R) = \N$ for $R \in \Z$.  Our first observation is that Lucas semigroups are semigroups.

\begin{lemma}\label{Lemma:IsSemigroup}
$\LL(P,Q,R)$ is a semigroup for each $P,Q \in \Z$ and $R \in \Q$.
\end{lemma}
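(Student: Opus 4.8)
The plan is to show that $\LL(P,Q,R)$ contains $0$ and is closed under addition, which are exactly the two requirements for a subsemigroup of $\N$. Since $U_0 = 0$, we have $U_0 R = 0 \in \Z$, so $0 \in \LL(P,Q,R)$ immediately. The real content is closure: given $m, n \in \LL(P,Q,R)$, meaning $U_m R \in \Z$ and $U_n R \in \Z$, I must show $U_{m+n} R \in \Z$.

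The key tool is the addition formula for Lucas sequences, which expresses $U_{m+n}$ in terms of lower-indexed $U$ and $V$ values. The standard identity is
\begin{equation*}
2 U_{m+n} = U_m V_n + U_n V_m,
\end{equation*}
valid for all $P,Q \in \Z$ (it can be verified directly from the Binet-type formulas $U_n = (\alpha^n - \beta^n)/(\alpha-\beta)$ and $V_n = \alpha^n + \beta^n$ when $\alpha \neq \beta$, and by the degenerate formulas when $\alpha = \beta$). A cleaner identity that avoids the factor of $2$ is
\begin{equation*}
U_{m+n} = U_m U_{n+1} - Q\, U_{m-1} U_n,
\end{equation*}
which also follows from the Binet forms. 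I would prefer the latter, since multiplying through by $R$ gives $U_{m+n} R = U_{n+1}(U_m R) - Q\, U_{m-1}(U_n R)$; here $U_{n+1}, U_{m-1}, Q$ are all integers, and both $U_m R$ and $U_n R$ are integers by hypothesis, so the right-hand side is an integer combination of integers, hence $U_{m+n} R \in \Z$. This handles closure directly and avoids any parity or divisibility-by-$2$ subtlety.

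The main obstacle, such as it is, lies in justifying the addition identity itself for all integer parameters $P, Q$, including the degenerate case $\alpha = \beta$ where the Binet formula takes the limiting form $U_n = n\alpha^{n-1}$. I would either cite the identity from the standard references \cite{BallotPaper, BallotBook} or verify it by a short induction on $n$ (fixing $m$) using the recurrence \eqref{eq:Lucas}; the induction is routine since both sides satisfy the same second-order recurrence in $n$ and agree at $n = 0, 1$. Once the identity is in hand, the semigroup closure is a one-line consequence, so I expect the proof to be short and the only care needed is in selecting an addition formula with integer (rather than merely rational) coefficients so that no division is required.
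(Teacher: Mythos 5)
Your proof is correct and follows essentially the same route as the paper: the paper also verifies $0 \in \LL(P,Q,R)$ via $U_0 = 0$ and establishes closure under addition using exactly the identity $U_{m+n} = U_m U_{n+1} - Q\, U_{m-1} U_n$, cited from \cite[(2.43)]{BallotBook}. Your preference for this identity over $2U_{m+n} = U_m V_n + U_n V_m$ (to avoid division by $2$) matches the paper's choice, and your extra care about justifying the identity in the degenerate case $\alpha = \beta$ is sound but handled in the paper simply by citation.
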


\begin{proof}
First $0 \in \LL(P,Q,R)$ since $U_0 = 0$.
Since $U_{m+n} =  U_m U_{n+1} - Q U_{m-1} U_n$ \cite[(2.43)]{BallotBook}, we see that $\LL(P,Q,R)$ is closed under addition.  The associativity of addition ensures that $\LL(P,Q,R)$ is subsemigroup of $\N$.
\end{proof}

\subsection{Local Lucas semigroups}
If $R = p^{-r}$, in which $p$ is prime and $r \in \Z$, then 
$\LL(P,Q,p^{-r})$ is a \emph{local Lucas semigroup}.  Observe that
\begin{equation}\label{eq:LocalDefinition}
\LL(P,Q,p^{-r}) = \{ n \in \N : \nu_p(U_n) \geq r\},
\end{equation}
in which $\nu_p(\cdot)$ denotes the $p$-adic valuation.  Note that $\LL(P,Q,p^{\nu_p(R)}) = \N$ for all but finitely many primes $p$, namely those primes which divide the denominator of $R$ when it is written in lowest terms.   Consequently,
\begin{equation}\label{eq:Intersection}
\LL(P,Q,R) = \bigcap_p \LL(P,Q,p^{\nu_p(R)}),
\end{equation}
in which $p$ runs over the prime numbers. In fact, only the denominator of $R$ matters.
If $R = N/D$, in which $N,D \in \Z$ with $D \neq 0$ and $\gcd(N,D) = 1$, then
\begin{align}
    \LL(P,Q,R) 
    &= \bigcap_p \LL(P,Q,p^{\nu_p(R)}) 
    = \bigcap_{p\mid D} \LL(P,Q,p^{\nu_p(R)}) \nonumber \\
    &=\bigcap_{p} \LL(P,Q,p^{-\nu_p(D)})
    =\LL(P,Q,\tfrac{1}{D}). \label{eq:OnlyDemon}
\end{align} 

The next lemma asserts that the intersection of Lucas semigroups, for the same sequence parameters $P$ and $Q$,
is a Lucas semigroup for those same parameters.

\begin{lemma}\label{Lemma:FixedPQClosure}
Fix $P,Q \in \Z$.
If $R_1,R_2,\ldots,R_n \in \Q$, then
$\bigcap_{i=1}^n \LL(P,Q,R_i) = \LL(P,Q,R)$ for some $R \in \Q$.
\end{lemma}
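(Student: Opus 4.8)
The plan is to reduce everything to denominators, translate membership into a system of $p$-adic valuation inequalities, and observe that intersecting such systems simply replaces each exponent by a maximum over the indices — which is exactly what the least common multiple encodes. Concretely, I expect the right choice to be $R = 1/\lcm(D_1,\ldots,D_n)$, where $D_i$ is the denominator of $R_i$ in lowest terms, and the whole argument to follow from the machinery already assembled in \eqref{eq:LocalDefinition}, \eqref{eq:Intersection}, and \eqref{eq:OnlyDemon}.

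First I would invoke \eqref{eq:OnlyDemon} to write $\LL(P,Q,R_i) = \LL(P,Q,1/D_i)$, where $D_i = \denom(R_i)$ is the denominator of $R_i$ written in lowest terms (taking $D_i = 1$ when $R_i \in \Z$, which contributes $\N$ to the intersection and so is harmless). Next I would recall from \eqref{eq:Intersection} and \eqref{eq:LocalDefinition} that membership in a Lucas semigroup with reciprocal-integer parameter is governed prime-by-prime: for each fixed $i$,
\begin{equation*}
n \in \LL(P,Q,\tfrac{1}{D_i}) \iff \nu_p(U_n) \geq \nu_p(D_i) \text{ for every prime } p,
\end{equation*}
since $U_n/D_i \in \Z$ exactly when $D_i \mid U_n$.

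With this description in hand, the intersection is immediate to compute. A natural number $n$ lies in $\bigcap_{i=1}^n \LL(P,Q,R_i)$ if and only if $\nu_p(U_n) \geq \nu_p(D_i)$ for every prime $p$ and every index $i$, which is equivalent to
\begin{equation*}
\nu_p(U_n) \geq \max_{1 \leq i \leq n} \nu_p(D_i) \quad \text{for every prime } p.
\end{equation*}
Setting $D = \lcm(D_1,\ldots,D_n)$ gives $\nu_p(D) = \max_{1 \leq i \leq n} \nu_p(D_i)$ for each $p$, so the condition above reads $\nu_p(U_n) \geq \nu_p(D)$ for all $p$, i.e.\ $n \in \LL(P,Q,\tfrac{1}{D})$. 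Taking $R = 1/D \in \Q$ then yields $\bigcap_{i=1}^n \LL(P,Q,R_i) = \LL(P,Q,R)$, as desired.

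There is no serious obstacle here: the content of the lemma is really the recognition that, after reducing to denominators via \eqref{eq:OnlyDemon}, the $p$-adic reformulation converts the set-theoretic intersection into a pointwise maximum of valuation thresholds, and that this maximum is realized on the nose by the least common multiple of the denominators. The only points requiring any care are checking that the degenerate cases $R_i \in \Z$ (denominator $1$) and $R_i = 0$ contribute the full semigroup $\N$ and so may be discarded, and confirming that $\nu_p(\lcm(D_1,\ldots,D_n)) = \max_i \nu_p(D_i)$, which is the defining property of the lcm.
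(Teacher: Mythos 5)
Your proposal is correct and takes essentially the same route as the paper: the paper also reduces each $R_i$ to $1/D_i$ via \eqref{eq:OnlyDemon}, localizes prime-by-prime using \eqref{eq:Intersection}, takes the maximum of the valuations $\nu_p(D_i)$, and sets $R = 1/D$ with $D = \prod_{p \mid D_1 \cdots D_n} p^{\max_i \nu_p(D_i)}$, which is exactly your $\lcm(D_1,\ldots,D_n)$. The only differences are cosmetic --- the paper phrases the max step as an interchange of the intersections $\bigcap_i \bigcap_p \LL(P,Q,p^{-\nu_p(D_i)})$ rather than directly through inequalities on $\nu_p(U_n)$, and you should avoid reusing $n$ both for the number of rationals and for the semigroup element.
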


\begin{proof}
For $i=1,2,\ldots,n$, write $R_i = N_i / D_i$, where $N_i,D_i \in \Z$ with $D_i \neq 0$ and $\gcd(N_i,D_i)=1$.
Let $D = \prod_{p \mid D_1 D_2 \cdots D_n} p^{\max\{ \nu_p(D_1), \nu_p(D_2),\ldots, \nu_p(D_n) \}}$ and $R =1/D$, so
\begin{align*}
\bigcap_{i=1}^n \LL(P,Q,R_i)
&= \bigcap_{i=1}^n \LL(P,Q,1/D_i)
=\bigcap_{i=1}^n \bigcap_{p } \LL(P,Q,p^{-\nu_p(D_i)}) \\
&=  \bigcap_{p }\bigcap_{i=1}^n \LL(P,Q,p^{-\nu_p(D_i)}) 
=  \bigcap_{p } \LL\big(P,Q,p^{-\max\{ \nu_p(D_1), \ldots, \nu_p(D_n) \} } \big) \\
&= \LL(P,Q,1/D) = \LL(P,Q,R).\qedhere
\end{align*}
\end{proof}

The following result implies that the small elements of a Lucas semigroup must be lonely:
if two consecutive natural numbers belong to a Lucas semigroup, then the semigroup
contains all subsequent natural numbers.

\begin{proposition}\label{Proposition:TwoTerm}
If $n,n+1 \in \LL(P,Q,R)$, then $\{n,n+1,\ldots\} \subseteq \LL(P,Q,R)$.
\end{proposition}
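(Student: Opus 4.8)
The plan is to argue directly from the defining recurrence \eqref{eq:Lucas}, avoiding any appeal to $p$-adic valuations or to the local decomposition \eqref{eq:Intersection}. Recall that $m \in \LL(P,Q,R)$ precisely when $U_m R \in \Z$. So I would first recast the hypothesis as $U_n R \in \Z$ and $U_{n+1} R \in \Z$, with the goal of showing $U_m R \in \Z$ for every $m \geq n$.

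The engine of the argument is the observation that if two consecutive terms $U_m R$ and $U_{m+1} R$ are integers, then so is the next one. Multiplying the recurrence $U_{m+2} = P U_{m+1} - Q U_m$ through by $R$ gives
\[
U_{m+2} R = P \, (U_{m+1} R) - Q \, (U_m R),
\]
which lies in $\Z$ because $P, Q \in \Z$ and each parenthesized factor is an integer by assumption. In other words, $m, m+1 \in \LL(P,Q,R)$ forces $m+2 \in \LL(P,Q,R)$.

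With this step in hand, I would run a two-term induction starting from the base pair $n, n+1 \in \LL(P,Q,R)$ supplied by the hypothesis: the engine produces $n+2$, then the pair $n+1, n+2$ produces $n+3$, and so on, yielding $m \in \LL(P,Q,R)$ for all $m \geq n$. This is exactly the asserted inclusion $\{n, n+1, \ldots\} \subseteq \LL(P,Q,R)$.

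There is no serious obstacle here; the only point requiring care is that the recurrence advances the index by two, so a single consecutive pair—rather than a single element—is the correct inductive unit, and one must note that the hypothesis furnishes precisely such a pair to seed the induction. The entire argument uses nothing beyond the integrality of $P$ and $Q$ and the definition \eqref{eq:LucasSemigroup}, so an alternative route through \eqref{eq:Intersection} and the valuations $\nu_p(U_m)$ would be strictly more roundabout than the direct computation above.
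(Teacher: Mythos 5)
Your proof is correct, and it takes a genuinely more elementary route than the paper's. The paper fixes a prime $p$ with $p^r$ exactly dividing the denominator of $R$, applies the ultrametric inequality $\nu_p(PU_{n+1} - QU_n) \geq \min\{\nu_p(PU_{n+1}), \nu_p(QU_n)\}$ together with $\nu_p(P), \nu_p(Q) \geq 0$ to get $n+2 \in \LL(P,Q,p^{-r})$, runs the same two-term induction locally, and then reassembles the global statement via the intersection decomposition \eqref{eq:Intersection}. You instead multiply the recurrence \eqref{eq:Lucas} by $R$ and observe directly that $U_{m+2}R = P(U_{m+1}R) - Q(U_m R) \in \Z$ whenever the two previous terms are integral, with the consecutive pair as the inductive unit --- which is exactly the right care point, and you handle it. The underlying engine (the defining recurrence plus $P,Q \in \Z$) is the same in both arguments; what your version buys is brevity and independence from the $p$-adic machinery, since the definition \eqref{eq:LucasSemigroup} of $\LL(P,Q,R)$ already encodes integrality globally. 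What the paper's version buys is uniformity with the rest of its development, where essentially every structural statement about Lucas semigroups is proved one prime at a time through \eqref{eq:LocalDefinition}; but for this particular proposition the localization is, as you say, strictly more roundabout, and your direct argument would serve as a drop-in replacement.
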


\begin{proof}
Suppose that $p^r$ exactly divides the denominator of $R$ (when written in lowest terms).
If $n,n+1 \in \LL(P,Q,R)$, then 
\begin{align*}
\nu_p(U_{n+2}) 
&= \nu_p( P U_{n+1} - Q U_n) 
\geq \min\{ \nu_p(P U_{n+1}), \nu_p(Q U_n)\} \\
&\geq \min\{ \nu_p(P) + \nu_p(U_{n+1}), \nu_p(Q) + \nu_p(U_n)\} \\
&\geq \min\{  \nu_p(U_{n+1}),\nu_p(U_n)\} \geq r,
\end{align*}
so $n+2 \in \LL(P,Q,p^{-r})$.  Induction ensures $\{n,n+1,\ldots\} \subseteq \LL(P,Q,p^{-r})$
and \eqref{eq:Intersection} implies that $\{n,n+1,\ldots\} \subseteq \LL(P,Q,R)$.
\end{proof}

To understand the structure of Lucas semigroups we need only study local Lucas semigroups; see \eqref{eq:Intersection}.  
In what follows, we consider three cases:
\begin{enumerate}
\item $PQ = 0$ (Section \ref{Section:Local1}).
\item $PQ \neq 0$ and $p \nmid \gcd(P,Q)$ (Section \ref{Section:Local2}).
\item $PQ \neq 0$ and $p \mid \gcd(P,Q)$ (Section \ref{Section:Local3}).
\end{enumerate}

\section{Local Lucas semigroups, Case I: $PQ = 0$}\label{Section:Local1}
Fix $P,Q \in \Z$ and let $U_n = U_n(P,Q)$ denote the Lucas sequence \eqref{eq:Lucas}.  
Local Lucas semigroups \eqref{eq:LocalDefinition} with $PQ = 0$ are the simplest of the three cases we must consider.
We summarize our findings in this case in the following theorem (recall from \eqref{eq:TailDefinition} the definition
$S_m = \{0 , m,m+1,\ldots\}$).

\begin{theorem}\label{Theorem:PQZero}
If $PQ = 0$ and $r \geq 1$,
\begin{equation*}
\LL(P,Q,p^{-r}) = 
\begin{cases}
\{0\} & \text{if $P \neq 0$, $Q = 0$, and $p \nmid P$},\\[3pt]
S_{ \lceil r / \nu_p(P)\rceil + 1 } & \text{if $P \neq 0$, $Q = 0$, and $p \mid P$},\\[3pt]
\semigroup{2} & \text{if $P=0$, $Q \neq 0$, and $p \nmid Q$},\\[4pt]
\semigroup{2, \,2\big\lceil r / \nu_p(Q) \big\rceil \!+\!1} & \text{if $P=0$, $Q \neq 0$, and $p \mid Q$},\\[4pt]
S_2 & \text{if $P = Q = 0$}.
\end{cases}
\end{equation*}
\end{theorem}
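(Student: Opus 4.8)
The plan is to reduce everything to an arithmetic-of-valuations computation via \eqref{eq:LocalDefinition}, which identifies $\LL(P,Q,p^{-r})$ with the set of $n \in \N$ for which $\nu_p(U_n) \geq r$. Since $PQ = 0$, at least one of $P, Q$ vanishes, and in each case the Lucas sequence has a closed form (given in Subsection \ref{Subsection:Lucas}, or read off directly from the recurrence \eqref{eq:Lucas}) whose $p$-adic valuation is transparent. Throughout I would use that $U_0 = 0$, so $\nu_p(U_0) = \infty \geq r$ and hence $0 \in \LL(P,Q,p^{-r})$ in every case, together with the fact that $\nu_p$ is additive on products of powers.

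First I would dispatch the two rows with $Q = 0$ and $P \neq 0$. Here $U_n = P^{n-1}$ for $n \geq 1$, so $\nu_p(U_n) = (n-1)\nu_p(P)$. If $p \nmid P$ then $\nu_p(P) = 0$, whence $\nu_p(U_n) = 0 < r$ for every $n \geq 1$ and only $n = 0$ survives, giving $\LL = \{0\}$. If $p \mid P$ then $\nu_p(P) \geq 1$, and $(n-1)\nu_p(P) \geq r$ is equivalent to $n - 1 \geq \lceil r/\nu_p(P)\rceil$, i.e.\ $n \geq \lceil r/\nu_p(P)\rceil + 1$; together with $n = 0$ this is exactly the tail $S_{\lceil r/\nu_p(P)\rceil + 1}$. (The threshold is $\geq 2$ since $r \geq 1$ and $\nu_p(P) \geq 1$, so the description is consistent with \eqref{eq:TailDefinition}.)

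Next I would handle $P = 0$ with $Q \neq 0$. From $U_{n+2} = -Q U_n$ one gets $U_n = 0$ for even $n$ and $U_{2k+1} = (-Q)^k$ for $k \geq 0$; hence every even $n$ lies in $\LL$ (as $\nu_p(0) = \infty$), while for odd $n = 2k+1$ we have $\nu_p(U_n) = k\,\nu_p(Q)$. If $p \nmid Q$, then no odd index qualifies and $\LL = \{0,2,4,\ldots\} = \semigroup{2}$. If $p \mid Q$, then $k\,\nu_p(Q) \geq r$ amounts to $k \geq \lceil r/\nu_p(Q)\rceil =: c$, so the admissible odd indices are precisely the odd $n \geq 2c + 1$, and $\LL$ is the set of all even numbers together with the odd numbers $\geq 2c+1$. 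Finally, for $P = Q = 0$ the recurrence forces $U_1 = 1$ and $U_n = 0$ for all $n \neq 1$, so $\nu_p(U_1) = 0 < r$ while every other index qualifies, giving $\LL = \{0,2,3,4,\ldots\} = S_2$.

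The only step requiring genuine care is the $p \mid Q$ row, where I must check that $\{\text{even } n\} \cup \{\text{odd } n \geq 2c+1\}$ coincides with $\semigroup{2, 2c+1}$: adding multiples of $2$ to $0$ produces all even numbers, adding them to $2c+1$ produces all odd numbers $\geq 2c+1$, and higher odd multiples of $2c+1$ land among already-covered elements. The other point of attention is bookkeeping the exponent in the flip-flop values, namely $U_{2k+1} = (-Q)^k$ with exponent $k = (n-1)/2$, since an off-by-one there would shift the odd threshold by $2$; I would pin this down against the base values $U_1 = 1$, $U_3 = -Q$, $U_5 = Q^2$ obtained from \eqref{eq:Lucas}. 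Everything else is a direct valuation count, consistent with this being the simplest of the three local cases.
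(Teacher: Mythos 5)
Your proposal is correct and follows essentially the same route as the paper's own proof: the identical five-case split, the same closed forms for $U_n$, and the same valuation computations, with the minor addition that you explicitly verify $\{\text{even } n\} \cup \{\text{odd } n \geq 2c+1\} = \semigroup{2,\,2c+1}$, a step the paper asserts without detail. One remark: your bookkeeping $U_{2k+1} = (-Q)^k$ is the correct form (and is what the paper's proof actually uses, citing Ballot), whereas the displayed formula \eqref{eq:FlipFlop} with exponent $\lceil n/2 \rceil$ contains an off-by-one error that your base-case check $U_1 = 1$, $U_3 = -Q$, $U_5 = Q^2$ rightly catches.
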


\begin{proof}
\noindent\textbf{Case 1.}
Let $P \neq 0$, $Q=0$, and $p \nmid P$.
Since $U_n = P^{n-1}$ for $n \geq 1$, we have $\nu_p(U_n) = 0$ for $n \geq 0$.
Thus, $\LL(P,Q,p^{-r}) = \{0\}$.  

\smallskip\noindent\textbf{Case 2.} 
Let $P \neq 0$, $Q = 0$, and $p \mid P$. Since $U_n = P^{n-1}$ for $n \geq 1$, we have
$\nu_p(U_n) \geq r$ if and only if
$(n-1) \nu_p(P) \geq r$ if and only if 
$n \geq \lceil r / \nu_p(P) \rceil + 1$,
so $\LL(P,Q,p^{-r}) =S_{ \lceil r / \nu_p(P) \rceil + 1 }$.

\smallskip\noindent\textbf{Case 3.} 
Let $P=0$, $Q \neq 0$, and $p \nmid Q$.  Then 
$U_{2i}=0$ and $U_{2i+1} = (-Q)^i$ \cite[p.~13]{BallotBook},
so $\LL(P,Q,p^{-r}) = \semigroup{2}$.

\smallskip\noindent\textbf{Case 4.} 
Let $P=0$, $Q \neq 0$, and $p \nmid Q$.  As in Case 3, we have $\semigroup{2} \subseteq  \LL(P,Q,p^{-r})$
because each $U_{2i} = 0$.
Since $p \mid Q$, we have
$\nu_p(U_{2i+1}) \geq r$ if and only if
$i \nu_p(Q) \geq r$
if and only if
$i \geq \lceil r / \nu_p(Q) \rceil$,
which means $\LL(P,Q,p^{-r}) = \semigroup{2, \,2\lceil r / \nu_p(Q) \rceil +1}$.

\smallskip\noindent\textbf{Case 5.} 
If $P = Q = 0$, then $U_n = 0$ for $n\geq 2$, so $\LL(P,Q,p^{-r}) = \N \backslash \{1\}$.
\end{proof}

The previous theorem permits us to characterize Lucas semigroups with $PQ = 0$.
From \eqref{eq:Intersection}, we see that each such semigroup is an intersection of local Lucas semigroups 
described by Theorem \ref{Theorem:PQZero}. Recall the definition \eqref{eq:ModularTail} of $S_m(n)$.

\begin{corollary}\label{Corollary:PQZero}
If $PQ = 0$ and $R \in \Q \setminus \Z$, then $\LL(P,Q,R)$ is one of the following:
\begin{enumerate}[leftmargin=*]
\item $\{0\}$,
\item $S_m = \{0,m,m+1,m+2,\ldots\}$ for some $m \geq 2$,
\item $\semigroup{2}$, 
\item $\semigroup{2,m}$ for some odd $m \geq 3$.
\end{enumerate}
Each such semigroup occurs in this manner and each is dimension-$2$ realizable.
\end{corollary}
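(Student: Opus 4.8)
The plan is to read off $\LL(P,Q,R)$ as a finite intersection of the local semigroups already classified in Theorem \ref{Theorem:PQZero}. Since $R \notin \Z$, \eqref{eq:OnlyDemon} lets me assume $R = 1/D$ with $D \geq 2$, and \eqref{eq:Intersection} gives
\[
\LL(P,Q,R) = \bigcap_{p \mid D} \LL\big(P,Q,p^{-\nu_p(D)}\big),
\]
a finite intersection indexed by the primes dividing $D$, each factor being one of the five local forms listed in Theorem \ref{Theorem:PQZero}. I would then split according to which of $P,Q$ vanishes and compute the intersection using two elementary identities: $S_a \cap S_b = S_{\max(a,b)}$ for tails, and $\semigroup{2,a} \cap \semigroup{2,b} = \semigroup{2,\max(a,b)}$ for odd $a,b \geq 3$ (both factors contain every even number, so only the odd threshold matters).

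For the casework: if $P \neq 0 = Q$, every factor is either $\{0\}$ or a tail $S_{m_p}$ with $m_p = \lceil \nu_p(D)/\nu_p(P)\rceil + 1 \geq 2$; if some $p \mid D$ fails $p \mid P$ the intersection collapses to $\{0\}$, yielding (a), and otherwise the tail identity yields $S_m$ with $m \geq 2$, yielding (b). If $P = 0 \neq Q$, every factor contains $\semigroup{2}$ and equals either $\semigroup{2}$ or $\semigroup{2,m_p}$ with $m_p = 2\lceil \nu_p(D)/\nu_p(Q)\rceil + 1$ odd and $\geq 3$; a single $\semigroup{2}$ factor forces (c), while otherwise the second identity yields $\semigroup{2,m}$ with $m$ odd, yielding (d). Finally, if $P = Q = 0$ then every local factor is $S_2$, so $\LL(P,Q,R) = S_2$, again of the form (b). This shows every such semigroup lies on the list.

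For the converse — that each listed semigroup actually occurs — I would exhibit explicit parameters and check them directly against \eqref{eq:LocalDefinition}. Namely $\{0\} = \LL(1,0,\tfrac12)$ since $U_n \equiv 1$; $S_m = \LL(2,0,2^{-(m-1)})$ for every $m \geq 2$ since $\nu_2(U_n) = n-1$; $\semigroup{2} = \LL(0,1,\tfrac12)$ since the odd-indexed terms are $\pm 1$; and $\semigroup{2,m} = \LL(0,2,2^{-s})$ for $m = 2s+1$, since by \eqref{eq:FlipFlop} the even-indexed terms vanish and the odd-indexed terms have $2$-adic valuation $\lceil n/2\rceil$. Each verification is a one-line valuation computation.

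It remains to show each is dimension-$2$ realizable. As these are Lucas semigroups this is a special case of the main equivalence (Theorem \ref{Theorem:GlobalIff}), but that result is proved only later, so to avoid circularity I would give a direct $2\times 2$ construction. The semigroup $\{0\}$ has matricial dimension $1$ (e.g.\ $\S(\diag(\tfrac12,1)) = \{0\}$), hence is dimension-$2$ realizable. For the others I would conjugate the Lucas companion matrix $M = \tinymatrix{P}{-Q}{1}{0}$, whose powers satisfy $M^n = \tinymatrix{U_{n+1}}{-QU_n}{U_n}{-QU_{n-1}}$, by a suitable diagonal matrix $S$, so that $A = SMS^{-1}$ has $A^n = SM^nS^{-1}$ and integrality of $A^n$ is governed by the $p$-adic valuation of the rescaled $U_n$-entry; concretely $\tinymatrix{2}{0}{2^{-(m-1)}}{0}$ realizes $S_m$, $\tinymatrix{0}{1/2}{2}{0}$ realizes $\semigroup{2}$, and $\tinymatrix{0}{-2^{s+1}}{2^{-s}}{0}$ realizes $\semigroup{2,m}$. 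The main obstacle is exactly this realizability step: one must check that conjugation rescales precisely the entry carrying the binding valuation while leaving the remaining entries of $A^n$ integral, so that $\S(A)$ reproduces the valuation condition \eqref{eq:LocalDefinition} and not a strictly smaller set. This needs slight care in the flip-flop regime $P=0$, where $M^n$ is diagonal for even $n$ and anti-diagonal for odd $n$, forcing a separate analysis of even and odd powers; once the entrywise valuations are tabulated, each of the three matrices above is confirmed by a direct computation.
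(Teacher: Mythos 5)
Your proof is correct and takes essentially the same route as the paper's: reduce via \eqref{eq:OnlyDemon} and \eqref{eq:Intersection} to a finite intersection of the local semigroups classified in Theorem \ref{Theorem:PQZero}, compute the intersections with elementary tail/$\semigroup{2,\cdot}$ identities (the paper packages these as $S_a(g_1)\cap S_b(g_2)=S_{\max(a,b)}(\lcm\{g_1,g_2\})$), and certify realizability by explicit $2\times 2$ matrices that are transposes or sign-variants of those in Table \ref{Table:Realize1}, correctly avoiding any circular appeal to Theorem \ref{Theorem:GlobalIff}. One small internal wrinkle, harmless to the conclusion: \eqref{eq:FlipFlop} as printed has an off-by-one (since $U_1=1$, the odd-index formula should read $(-Q)^{\lfloor n/2\rfloor}$), so the valuation $\lceil n/2\rceil$ you quote would yield $\LL(0,2,2^{-s})=\semigroup{2,2s-1}$ rather than your stated $\semigroup{2,2s+1}$, which matches the \emph{corrected} formula --- either way the family sweeps out all odd $m\geq 3$, so case (d) still occurs as claimed.
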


\begin{proof}
Consider intersections of the semigroups described in the previous theorem and use 
the fact that $S_a(g_1)\cap S_b(g_2)=S_{\max(a,b)}(\lcm\{g_1,g_2\})$.
The data in Table \ref{Table:Realize1} show that the semigroups listed above are dimension-$2$ realizable.
\end{proof}

\begin{table}
    \begin{equation*}
   \begin{array}{c|cc}
    S  & A & \mdim S \\[2pt]
    \hline
    \{0\} & \tinymatrix{ \frac{1}{2} }{0}{0}{0}  & 1 \\[8pt] 
     \text{$S_k$ with $k\geq 2$} & \tinymatrix{2}{2^{1-k}}{0}{0} & 2 \\[8pt]
    \semigroup{2} & \tinymatrix{1}{ \frac{1}{2} }{0}{1} & 2 \\[8pt]
    \text{$\semigroup{2,k}$ with $k \geq 3$ odd} & \scriptsize\minimatrix{0}{2^{-\frac{k-1}{2}}}{2^{\frac{k+1}{2}}}{0}& 2 
    \end{array}
    \end{equation*}
    \caption{The semigroups in Corollary \ref{Corollary:PQZero} are dimension-$2$ realizable.  Here $S = \S(A)$ for the indicated matrix.
    The second entry above corrects an off-by-one error in \cite[Table 1]{NSRM1}.}
    \label{Table:Realize1}
\end{table}

\section{Local Lucas semigroups, Case II: $PQ \neq 0$ and $p$ is regular}\label{Section:Local2}

Recall that a prime $p$ is \emph{regular} for $U_n = U_n(P,Q)$ if $p \nmid \gcd(P,Q)$.
The \emph{rank of appearance} of $m \geq 2$ in a Lucas sequence $U_n$
is the least $\rho = \rho(m)\geq 2$, if it exists, such that $m \mid U_{\rho}$ \cite[Sec.~2.4, Def.~1]{BallotBook}.
If $\gcd(m,Q) = 1$, then $\rho(m)$ exists \cite[Sec.~2.4, Lem.~8]{BallotBook}.  In particular,
if $p$ is a regular prime, then $\rho(p)$ exists.  Since $p$ remains fixed throughout much of the following, we let
$\rho = \rho(p)$.  The \emph{rank exponent} of $p$ is $\nu = \nu_p(U_{\rho})$.
In what follows, $(x)_+=\max\{x,0\}$.

\begin{theorem}\label{Theorem:RegPrimeCyclicLocal}
If $PQ \neq 0$, $p \nmid \gcd(P,Q)$, and $r \geq 1$, then
\begin{equation*}
\LL(P,Q,p^{-r})= \begin{cases}
 \{0\} & \text{if $p \mid Q$}, \\[3pt]
 \semigroup{ p^{ (r-\nu)_+}\rho} & \text{if $p \geq 3$ and $p \nmid Q$},\\[3pt]
 \semigroup{  2^{ 1 + (r - \nu_2(P))_+ }  } & \text{if $p= 2 \mid P$ and $p \nmid Q$},\\[3pt]
 \semigroup{ 3 \cdot 2^{(r - \nu_2(U_3))_+} }& \text{if $p=2 \nmid P$ and $p \nmid Q\equiv 1\pmod{4}$},\\[3pt]
\semigroup{ 3 } & \text{if $p=2 \nmid P$, $p \nmid Q\equiv -1\pmod{4}$, and $r =1$},\\[3pt]
 \semigroup{ 6 \cdot 2^{(r - \nu_2(U_6) )_+ } } &\text{if $p=2 \nmid P$, $p \nmid Q\equiv -1\pmod{4}$, and $r\neq1$}.
 \end{cases}
\end{equation*}
\end{theorem}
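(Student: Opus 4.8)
The plan is to characterize $\LL(P,Q,p^{-r}) = \{n \in \N : \nu_p(U_n) \geq r\}$ by understanding exactly when and how strongly $p$ divides the terms $U_n$. The foundational fact from the theory of Lucas sequences (available in \cite{BallotBook}) is the \emph{law of apparition}: for a regular prime $p$ with $p \nmid Q$, we have $p \mid U_n$ if and only if $\rho \mid n$, where $\rho = \rho(p)$ is the rank of appearance. Thus the set $\{n : \nu_p(U_n) \geq 1\}$ is exactly $\semigroup{\rho}$, and the task reduces to tracking higher powers of $p$ along the arithmetic progression of indices divisible by $\rho$. The key quantitative input is the \emph{lifting-the-exponent} (LTE) style formula for Lucas sequences: for $\rho \mid n$ one has $\nu_p(U_n) = \nu_p(U_\rho) + \nu_p(n/\rho)$ under suitable hypotheses, i.e.\ $\nu_p(U_n) = \nu + \nu_p(n/\rho)$ where $\nu = \nu_p(U_\rho)$ is the rank exponent. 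This is what makes each resulting semigroup a single cyclic semigroup.

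First I would dispose of the case $p \mid Q$: since $\gcd(P,Q)=1$ forces $p \nmid P$, the recurrence $U_{n+2} = PU_{n+1} - QU_n$ reduces mod $p$ to $U_{n+2} \equiv P U_{n+1}$, so $U_n \equiv P^{n-1} \not\equiv 0$ for $n \geq 1$; hence $\nu_p(U_n) = 0$ for all $n \geq 1$ and $\LL = \{0\}$. This explains the first branch. For the remaining branches $p \nmid Q$, I would invoke the law of apparition to get $\{n : p \mid U_n\} = \rho\N$, then apply the LTE formula to compute, for $n = \rho k$, that $\nu_p(U_n) \geq r$ if and only if $\nu + \nu_p(k) \geq r$, i.e.\ $\nu_p(k) \geq (r-\nu)_+$, i.e.\ $p^{(r-\nu)_+} \mid k$. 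Since the valid indices are $n = \rho k$ with $p^{(r-\nu)_+} \mid k$, the smallest is $n = \rho \, p^{(r-\nu)_+}$ and the semigroup is exactly $\semigroup{p^{(r-\nu)_+}\rho}$, matching the second branch. Establishing that this set is precisely the cyclic semigroup generated by that element (rather than merely containing it) follows because the admissible indices form exactly the multiples of $\rho\, p^{(r-\nu)_+}$.

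The subtlety, and the reason the odd prime case splits off from $p=2$, is that the clean LTE formula $\nu_p(U_{\rho k}) = \nu + \nu_p(k)$ holds without exception only for $p \geq 3$; for $p = 2$ the standard LTE has a well-known correction term and irregular behavior at the prime $2$, so the rank of appearance and its multiples must be handled by hand. For $p = 2$ I would split according to the parity of $P$ and the residue of $Q$ modulo $4$, since these determine $\rho(2)$ and the initial $2$-adic valuations: when $2 \mid P$ (and $2 \nmid Q$) one finds $\rho = 2$ and the valuations grow in a way giving $\semigroup{2^{1+(r-\nu_2(P))_+}}$; when $2 \nmid P$ the value of $Q \bmod 4$ controls $\rho$ (yielding $\rho = 3$ when $Q \equiv 1$ and $\rho = 6$ when $Q \equiv -1$, reflecting the period of $U_n \bmod 2$ and $\bmod 4$), producing the $\semigroup{3 \cdot 2^{(\cdots)}}$ and $\semigroup{6 \cdot 2^{(\cdots)}}$ branches. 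The genuinely exceptional case is $p = 2 \nmid P$, $Q \equiv -1 \pmod 4$, $r = 1$: here one must check directly that $\nu_2(U_3) \geq 1$ already (so the rank of appearance is $3$, not $6$) even though for $r \geq 2$ the controlling index jumps to $6$; this discontinuity is exactly why $r=1$ gets its own branch giving $\semigroup{3}$.

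I expect the main obstacle to be the $p=2$ analysis, specifically verifying the correct rank of appearance and the precise $2$-adic valuation growth in each of the four subcases. The odd-prime branch is essentially a direct application of LTE once apparition is invoked, but the prime $2$ requires either a careful case-by-case computation of $\nu_2(U_n)$ along small indices to pin down $\rho(2)$ and $\nu_2(U_\rho)$, or an appeal to the specialized $2$-adic valuation results for Lucas sequences in \cite{BallotPaper, BallotBook}. In particular, establishing the $r=1$ anomaly for $Q \equiv -1 \pmod 4$ — distinguishing why $U_3$ is even but the sustained $2$-power growth only kicks in at index $6$ — will be where the argument demands the most care, and I would lean on the traditional case splits for $\nu_2$ of Lucas sequences that specialists use, rather than rederiving them from scratch.
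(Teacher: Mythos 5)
Your proposal matches the paper's proof essentially step for step: the same initial case split on $p \mid Q$ (handled by reducing the recurrence mod $p$ to get $U_n \equiv P^{n-1}$), the same use of the law of apparition together with the formula $\nu_p(U_{\rho k}) = \nu + \nu_p(k)$ for odd regular primes, and the same reliance on the specialist $2$-adic valuation results of Ballot for the $p=2$ subcases split by the parity of $P$ and $Q \bmod 4$, including isolating the $r=1$ branch via the fact that $2 \parallel U_{6n+3}$ when $Q \equiv -1 \pmod{4}$. One small slip: your parenthetical ``$\rho = 6$ when $Q \equiv -1$'' is wrong --- $\rho(2) = 3$ whenever $P,Q$ are both odd, since $U_n$ is even if and only if $3 \mid n$, and it is only the growth of \emph{higher} powers of $2$ that is controlled at index $6$ --- but your subsequent sentence corrects this, and the argument as outlined is the paper's.
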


\begin{proof}
Recall from \eqref{eq:LocalDefinition} that $n \in \LL(P,Q,p^{-r}) \iff \nu_p(U_n)  \geq r$.

\smallskip\noindent\textbf{Case 1.}
Suppose $p \mid Q$. Then $p \nmid U_n$ for $n \geq 1$ by \cite[Sect.~2.4, Lem.~6]{BallotBook},
so each $\nu_p(U_n) = 0$.  Thus, $L(P,Q,p^{-r}) = \{0\}$.

\smallskip\noindent\textbf{Case 2.} Suppose $p \geq 3$ and $p \nmid Q$. Then $\LL(P,Q,p^r) \subseteq \semigroup{\rho}$ since \cite{BallotErrata} ensures that
\begin{equation*}
\nu_p(U_n) =
\begin{cases} 
0 & \text{if $\rho \nmid n$},\\
\nu + \nu_p\big(\frac{n}{\rho}\big), & \text{if $\rho \mid n$}.
\end{cases}
\end{equation*}
Then $\LL(P,Q,2^{-r}) = \semigroup{ p^{ (r-\nu)_+}\rho}$, so
\begin{equation*}
\nu_p(U_{\rho i}) \geq r 
\iff \nu + \nu_p(i) \geq r  
\iff \nu_p( i) \geq  (r-\nu)_+.
\end{equation*}

\smallskip\noindent\textbf{Case 3.}
Suppose $p=2 \mid P$ and $p \nmid Q$. Then $P$ is even and $Q$ is odd, so 
\begin{equation*}
\nu_2(U_{2i}) = \nu_2(iP)
\quad\text{and}\quad 
\nu_2(U_{2i+1}) = 0 
\end{equation*}
for $i \geq 0$ \cite[Sec.~2.5, Thm.~13]{BallotBook}.  Thus, $\LL(P,Q,p^{-r}) \subseteq \semigroup{2}$ and
\begin{align*}
\nu_2(U_{2i}) \geq r
&\iff \nu_2(iP) \geq r 
\iff \nu_2(i) + \nu_2(P) \geq r \\
&\iff \nu_2(i) \geq ( r-\nu_2(P) )_+ \\
&\iff \nu_2(2i) \geq 1 + (r - \nu_2(P))_+,
\end{align*}
so $\LL(P,Q,2^{-r}) = \semigroup{ 2^{ 1 + (r - \nu_2(P))_+ } }$.

\smallskip\noindent\textbf{Case 4.}
Suppose $p=2 \nmid P$ and $p \nmid Q$.  Then $P,Q$ are odd, so
$U_n$ is even if and only if $3 \mid n$ \cite[IV.18 page 60]{Ribenboim1996TheNB}.
Thus, $\LL(P,Q,p^{-r}) \subseteq \semigroup{3}$.
\begin{enumerate}[leftmargin=*]
\item If $Q \equiv 1 \pmod{4}$, then $4 \mid U_3$ and  $\nu_2(U_{3n}) = \nu_2(nU_3)$ for $n \geq 0$ \cite[Thm.~14, Sec.~2.5]{BallotBook}.  
Then $\LL(P,Q,p^{-r}) = \semigroup{ 3 \cdot 2^{r - \nu_2(U_3)} }$ since
\begin{align*}
\nu_2(U_{3n}) \geq r 
&\iff \nu_2(nU_3) \geq r  \\
&\iff \nu_2(n) \geq r - \nu_2(U_3) \\
&\iff 3n \in \langle 3 \cdot 2^{r - \nu_2(U_3)} \rangle.
\end{align*}

\item If $Q \equiv -1 \pmod{4}$, then $2 \parallel U_{6n+3}$ (the symbol $\parallel$ means ``exactly divides''), $\nu_2(U_6) \geq 3$, and 
$ \nu_2(U_{6n}) = \nu_2(nU_6) = \nu_2(n) + \nu_2(U_6) \geq \nu_2(n) + 3$ \cite[Thm.~14, Sec.~2.5]{BallotBook},
so $\LL(P,Q,p^{-r}) \subseteq \semigroup{3}$ as $\nu_2(U_n)\geq 1 \iff 3\mid n$.

If $r=1$, then since $2 \parallel U_{6n+3}$ we know that $\nu_2(U_{6n+3})= 1=r$ for every $n \in \N$ and by the discussion above $\nu_2(U_{6n})\geq 3 \geq 1=r$, so $\nu_2(U_{3n})\geq 1=r$ for every $n \in \N$. Hence, $\LL(p,q,p^{-r})=\semigroup{3}$.

Now assume $r \neq 1$. Then $r \geq 2$. Since $2 \parallel U_{6n+3}$ we know that $\nu_2(U_{6n+3})= 1 < r$. So, there does not exist an $n \in \N$ such that $\nu_2(U_{6n+3}) \geq r$. Therefore, $\LL(P,Q,p^{-r})\subseteq \semigroup{6}$ for $r\geq 2$.

Moreover,
\begin{align*}
\nu_2(U_{6n}) \geq r 
&\iff \nu_2(n) + \nu_2(U_6) \geq r \\
&\iff \nu_2(n) \geq r - \nu_2(U_6) \\
&\iff 6n \in \semigroup{ 6 \cdot 2^{r - \nu_2(U_6)} },
\end{align*}
so $\LL(P,Q,p^{-r}) =  \semigroup{ 6 \cdot 2^{r - \nu_2(U_6)} }$. 
 \qedhere
\end{enumerate}
\end{proof}

\begin{table}\footnotesize
\begin{equation*}
\begin{array}{c|cccc|l}
m & P & Q & p & r & \hspace{1.5in} U_n(P,Q)   \\[2pt]
\hline
2 & 3 & 5 & 3 & 1 & \{0, 1, 3, 4, -3, -29, -72, -71, 147, 796, 1653, 979,\ldots\} \\[2pt]
3 & 3 & 2 & 7 & 1 & \{0,1,3,7,15,31,63,127,255,511,1023,2047,\ldots\}\\[2pt]
4 & 1 & 2 & 3 & 1 & \{0,1,1,-1,-3,-1,5,7,-3,-17,-11,23,\ldots\} \\[2pt]
5 & 3 & 1 & 5 & 1 & \{0, 1, 3, 8, 21, 55, 144, 377, 987, 2584, 6765, 17711,\ldots\} \\[2pt]
6 & 1 & 2 & 5 & 1 & \{0, 1, 1, -1, -3, -1, 5, 7, -3, -17, -11, 23, 45, -1, -91, -89,\ldots\} \\[2pt]
7 & 1 & 2 & 7 & 1 & \{0, 1, 1, -1, -3, -1, 5, 7, -3, -17, -11, 23, 45, -1, -91, -89, 93, 271,\ldots\} \\[2pt]
8 & 4 & 3 & 41 & 1 & \{0, 1, 4, 13, 40, 121, 364, 1093, 3280, 9841, 29524, 885730,\ldots\} \\[2pt]
9 & 1 & 2 & 17 & 1 & \{0, 1, 1, -1, -3, -1, 5, 7, -3, -17, -11, 23, 45, -1, -91, -89, 93,\ldots\} \\[2pt]
10 & 1 & 2 & 11 & 1 & \{0, 1, 1, -1, -3, -1, 5, 7, -3, -17, -11, 23, 45, -1, -91, -89, 93,\ldots\} \\[2pt]
11 & 1 & 2 & 23 & 1 & \{0, 1, 1, -1, -3, -1, 5, 7, -3, -17, -11, 23, 45, -1, -91, -89, 93,\ldots\} \\[2pt]
12 & 3 & 1 & 23 & 1 & \{0, 1, 3, 8, 21, 55, 144, 377, 987, 2584, 6765, 17711, 46368, 121393,\ldots\} \\[2pt]
13 & 3 & 2 & 8191 & 1 & \{0, 1, 3, 7, 15, 31, 63, 127, 255, 511, 1023, 2047, 4095, 8191, 16383,\ldots\} \\[2pt]
14 & 1 & 2 & 13 & 1 & \{0, 1, 1, -1, -3, -1, 5, 7, -3, -17, -11, 23, 45, -1, -91, -89,\ldots\} \\[2pt]
15 & 3 & 1 & 31 & 1 & \{0, 1, 3, 8, 21, 55, 144, 377, 987, 2584, 6765, 17711, 46368, 121393,317811,\ldots\} \\[2pt]
16 & 1 & 2 & 31 & 1 & \{0, 1, 1, -1, -3, -1, 5, 7, -3, -17, -11, 23, 45, -1, -91, -89,\ldots\} \\[2pt]
17 & 4 & 1 & 67 & 1 & \{0, 1, 4, 15, 56, 209, 780, 2911, 10864, 40545, 151316, 564719, 2107560,\ldots\} \\[2pt]
18 & 3 & 1 & 107 & 1 & \{0, 1, 3, 8, 21, 55, 144, 377, 987, 2584, 6765, 17711, 46368, 121393,317811,\ldots\} \\[2pt]
19 & 3 & 1 & 37 & 1 & \{0, 1, 3, 8, 21, 55, 144, 377, 987, 2584, 6765, 17711, 46368, 121393,317811,\ldots\} \\[2pt]
20 & 1 & 2 & 19 & 1 & \{0, 1, 1, -1, -3, -1, 5, 7, -3, -17, -11, 23, 45, -1, -91, -89, 93,271, 85,\ldots\} \\[2pt]
21 & 3 & 2 & 337 & 1 & \{0, 1, 3, 7, 15, 31, 63, 127, 255, 511, 1023, 2047, 4095, 8191, 16383, 32767,\ldots\} \\[2pt]
22 & 3 & 1 & 43 & 1 & \{0, 1, 3, 8, 21, 55, 144, 377, 987, 2584, 6765, 17711, 46368, 121393,317811\ldots\} \\[2pt]
23 & 13 & 1 & 47 & 1 & \{0, 1, 13, 168, 2171, 28055, 362544, 4685017, 60542677, 782369784,\ldots\} \\[2pt]
24 & 11 & 1 & 47 & 1 & \{0, 1, 11, 120, 1309, 14279, 155760, 1699081, 18534131, 202176360,\ldots\} \\[2pt]
25 & 3 & 2 & 1801 & 1 & \{0, 1, 3, 7, 15, 31, 63, 127, 255, 511, 1023, 2047, 4095, 8191, 16383,32767,\ldots\} \\[2pt]
26 & 3 & 2 & 2731 & 1 & \{0, 1, 3, 7, 15, 31, 63, 127, 255, 511, 1023, 2047, 4095, 8191, 16383, 32767,\ldots\} \\[2pt]
27 & 3 & 1 & 53 & 1 & \{0, 1, 3, 8, 21, 55, 144, 377, 987, 2584, 6765, 17711, 46368, 121393,317811,\ldots\} \\[2pt]
28 & 1 & 2 & 29 & 1 & \{0, 1, 1, -1, -3, -1, 5, 7, -3, -17, -11, 23, 45, -1, -91, -89, 93,271, 85,\ldots\} \\[2pt]
29 & 3 & 1 & 59 & 1 & \{0, 1, 3, 8, 21, 55, 144, 377, 987, 2584, 6765, 17711, 46368, 121393,317811,\ldots\} \\[2pt]
30 & 3 & 2 & 331 & 1 & \{0, 1, 3, 7, 15, 31, 63, 127, 255, 511, 1023, 2047, 4095, 8191, 16383, 32767,\ldots\} 
\end{array}
\end{equation*}
\caption{Data for the proof of Corollary \ref{Corollary:Regular}, Case 1: parameters such that $\LL(P,Q,p^{-r}) = \semigroup{m}$ for $2 \leq m \leq 30$.}
\label{Table:Thirty}
\end{table}

Every cyclic semigroup in $\N$, except $\N$ itself, occurs in the following result.

\begin{corollary}\label{Corollary:Regular}
Let $PQ \neq 0$.  If each prime that divides the denominator of $R \in \Q$ is regular, then $\LL(P,Q,R)$ is 
$\{0\}$ or $\semigroup{m}$ for some $m \geq 2$.
Each such semigroup occurs in this manner and each is dimension-$2$ realizable.
\end{corollary}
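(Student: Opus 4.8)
The plan is to split the statement into three parts: (i) every such $\LL(P,Q,R)$ equals $\{0\}$ or $\semigroup m$ for some $m\ge 2$; (ii) each of these semigroups is realized as some $\LL(P,Q,R)$; and (iii) each is dimension-$2$ realizable. Throughout assume $R\notin\Z$ (otherwise $\LL(P,Q,R)=\N$), and let $D>1$ be the denominator of $R$ in lowest terms. By \eqref{eq:OnlyDemon} and \eqref{eq:Intersection} we have $\LL(P,Q,R)=\bigcap_{p\mid D}\LL(P,Q,p^{-\nu_p(D)})$, a finite intersection indexed by the (regular, by hypothesis) primes dividing $D$. First I would feed each factor into Theorem~\ref{Theorem:RegPrimeCyclicLocal}: since $PQ\neq 0$ and each such $p$ is regular, every factor is either $\{0\}$ (exactly when $p\mid Q$) or a cyclic semigroup $\semigroup{m_p}$ with $m_p\ge 2$. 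If some factor is $\{0\}$ then so is the intersection; otherwise $\bigcap_{p\mid D}\semigroup{m_p}=\semigroup{\lcm_{p\mid D}m_p}$, whose generator is at least $2$. This settles (i).

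For (ii), the semigroup $\{0\}$ arises from Case~1 of Theorem~\ref{Theorem:RegPrimeCyclicLocal}; e.g.\ $\LL(1,2,\tfrac12)=\{0\}$ because $2\mid Q$. For $\semigroup m$ with $2\le m\le 30$, the required parameters $(P,Q,p,r)$ are supplied by Table~\ref{Table:Thirty}. For $m\ge 31$ I would work inside the single fixed regular sequence $U_n(1,-1)$ (the Fibonacci sequence): by the primitive divisor theorem of Carmichael and Bilu--Hanrot--Voutier \cite{BallotBook}, $U_m$ has a primitive prime divisor $p$ whenever $m>30$, which means precisely that $\rho(p)=m$. Such a $p$ is odd (since $\rho(2)=3<m$) and satisfies $p\nmid Q=-1$, so Case~2 of Theorem~\ref{Theorem:RegPrimeCyclicLocal} with $r=1\le\nu$ yields $\LL(1,-1,p^{-1})=\semigroup{p^{(1-\nu)_+}\rho}=\semigroup m$. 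I expect this step to be the main obstacle, and it explains why the explicit table stops at $30$: producing a prime of prescribed rank of appearance is exactly the content of the deep primitive divisor theorem, whose finitely many exceptional indices all lie below the threshold $30$.

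Finally, for (iii) I would exhibit one explicit matrix per semigroup. Writing $D=p^{r}$ for the relevant local exponent and setting
\[
A=\begin{bmatrix}0 & 1/D\\ -QD & P\end{bmatrix}\in\M_2(\Q),
\]
one computes $\tr A=P$ and $\det A=Q$, so Cayley--Hamilton gives $A^n=U_n A-QU_{n-1}I$ with $U_n=U_n(P,Q)$. The $(1,1)$, $(2,1)$, and $(2,2)$ entries of $A^n$ are automatically integral, while the $(1,2)$ entry is $U_n/D$; hence $A^n\in\M_2(\Z)\iff U_n/D\in\Z\iff n\in\LL(P,Q,p^{-r})$. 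Thus $\S(A)$ equals the Lucas semigroup produced in (ii), so every semigroup in the statement is dimension-$2$ realizable. The only routine loose ends are the parity and coprimality bookkeeping in part (ii) and confirming which entries of $A^n$ are integral in part (iii), neither of which is delicate.
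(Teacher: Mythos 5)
Your proposal is correct, and its skeleton matches the paper's proof: reduce to local factors via Theorem~\ref{Theorem:RegPrimeCyclicLocal} (noting $\{0\}$ occurs exactly when some $p \mid Q$ and that every cyclic generator is at least $2$ since $\rho \geq 2$), observe that an intersection of cyclic semigroups is cyclic, use Table~\ref{Table:Thirty} for $2 \leq m \leq 30$, and invoke the primitive divisor theorem for $m \geq 31$. You deviate in two places, both legitimately. First, for $m \geq 31$ you work in the Fibonacci sequence $U_n(1,-1)$, where a primitive divisor $p$ of $U_m$ must be odd because $\rho(2) = 3 < m$; the paper instead takes $P=1$, $Q=2$, for which every $U_n$ with $n \geq 1$ is odd, so oddness of $p$ is immediate and no separate rank-of-$2$ argument is needed. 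Both choices satisfy the hypotheses of Case~2 of Theorem~\ref{Theorem:RegPrimeCyclicLocal} (your $r=1 \leq \nu$ gives $(r-\nu)_+ = 0$ just as the paper's $r = \nu$ does), and for Fibonacci even Carmichael's theorem would suffice, so this step is fine. Second, for dimension-$2$ realizability the paper simply exhibits $\S\big(\big[\tfrac{1}{2}\big]\big) = \{0\}$ and $\S\big(\tinymatrix{1}{1/m}{0}{1}\big) = \semigroup{m}$, whereas you realize the Lucas semigroup itself via the companion-type matrix $A = \tinymatrix{0}{1/D}{-QD}{P}$ and the Cayley--Hamilton identity $A^n = U_n A - Q U_{n-1} I$; your entrywise computation is correct and self-contained (so there is no circularity with the later Proposition~\ref{Proposition:General2DimReduction} and Theorem~\ref{Theorem:GlobalIff}, which it anticipates), but it is heavier machinery than the two trivial matrices the paper uses. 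One small point in your favor: you explicitly dispose of the degenerate case $R \in \Z$ (where $\LL(P,Q,R) = \N$), which the corollary's statement leaves implicit.
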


\begin{proof}
The intersection of cyclic semigroups is cyclic, so the first statement follows from Theorem \ref{Theorem:RegPrimeCyclicLocal}.
The first case in Theorem \ref{Theorem:RegPrimeCyclicLocal} ensures that $\{0\}$ occurs,
and the remaining four cases ensure that $\N$ never occurs (remember that $\rho \geq 2$).
Since $\S([\frac{1}{2}]) = \{0\}$ and $\S(\tinymatrix{1}{1/m}{0}{1}) = \semigroup{m}$ for $m \geq 2$, 
it suffices to show that $\semigroup{m}$ occurs in Theorem \ref{Theorem:RegPrimeCyclicLocal} for each $m \geq 2$.

\smallskip\noindent\textbf{Case 1.} For $2 \leq m \leq 30$, Table \ref{Table:Thirty} provides 
parameters $P$ and $Q$, a prime $p \geq 3$, and exponent $r \geq 1$ such that $\LL(P,Q,p^{-r}) = \semigroup{m}$.

\smallskip\noindent\textbf{Case 2.} Suppose $m \geq 31$.  Each such $m$ is 
\emph{totally nondefective} \cite[Thm.~1.4]{Bilu}:  the $m$th term $U_m$ in every
Lucas sequence $U_n = U_n(P,Q)$ with nonzero \emph{discriminant} $D=P^2-4Q$
has a \emph{primitive} divisor, that is, a prime $p$ such that $p \mid U_m$ but $p \nmid D U_1 U_2 \cdots U_{m-1}$.
Let $P=1$ and $Q=2$, so $D = -7 \neq 0$ and $U_n = U_n(P,Q)$ is 
\begin{equation*}
0, 1, 1, -1, -3, -1, 5, 7, -3, -17, -11, 23, 45, -1, -91, -89, 93, 271, 85, -457,\ldots.
\end{equation*}
Each $U_n$ is odd for $n \geq 1$ \cite[Lem.~1, Sec.~2.3]{BallotBook}.
If $p$ is a primitive divisor of $U_m$ (that is, $m$ is the rank of appearance of $p$), then $p \geq 3$ and $p \nmid Q$.
Let $r = \nu = \nu_p(U_m)$.
The second case of Theorem \ref{Theorem:RegPrimeCyclicLocal} ensures that 
$\LL(P,Q,p^{-r}) = \semigroup{m}$, as desired. See Table \ref{Table:Cyclic} for several illustrative examples.
\end{proof}

\begin{table}\small
\begin{equation*}
\begin{array}{c|ccc|c|ccc}
m & U_m & p & r & m & U_m & p & r\\
\hline
31 & -7193 & 7193 & 1 & 41 & -703889 & 409 ,\,  1721 & 1 ,\,  1 \\
32 & 41757 & 449 & 1 & 42 & 802165 & 43 & 1 \\
33 & 56143  & 2441 & 1 & 43 & 2209943 & 257 ,\,  8599 & 1 ,\,  1 \\
34 & -27371 & 101 & 1 & 44 & 605613 &  131 & 1 \\
35 & -139657 & 71 ,\,  281 & 1 ,\,  1 & 45 &-3814273 & 2521 & 1 \\
36 & -84915 & 37 & 1 & 46 &  -5025499 & 5197 & 1 \\
37 & 194399 & 73 ,\,  2663 & 1 & 47 & 2603047 & 2603047 & 1 \\
38 & 364229 & 797 & 1 & 48 & 12654045 & 193 & 1\\
39 & -24569 & 79 ,\,  311 & 1 ,\,  1 & 49 & 7447951 & 97 ,\,  1567 & 1 ,\,  1 \\
40 & -753027 & 1201 & 1 & 50 & -17860139 & 401 & 1\\
\end{array}
\end{equation*}
\caption{Parameters that lead to $\LL(P,Q,p^{-r}) = \semigroup{m}$ for $31 \leq m \leq 50$,
in which $P = 1$ and $Q=2$ and $U_n =U_n(P,Q)$.}
\label{Table:Cyclic}
\end{table}

\section{Local Lucas semigroups, Case III: $PQ \neq 0$ and $p$ is special}\label{Section:Local3}
Recall that $p$ is \emph{special} if $p \mid \gcd(P,Q)$; that is, if $p \mid P$ and $p \mid Q$.
This is the most complicated of the three main cases, and we must break things down into three cases
to address the complexity.  First, we make an important observation.

\begin{theorem}\label{Theorem:SpecialNumerical}
If $p$ is a special prime, then $\LL(P,Q,p^{-r})$ is a numerical semigroup.
That is, the semigroup $\LL(P,Q,p^{-r})$ has finite complement in $\N$.
\end{theorem}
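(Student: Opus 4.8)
The plan is to reduce the statement to a claim about $p$-adic valuations and then show that these valuations grow linearly in $n$. By \eqref{eq:LocalDefinition}, we have $n \in \LL(P,Q,p^{-r})$ if and only if $\nu_p(U_n) \geq r$, so it suffices to prove that $\nu_p(U_n) \geq r$ for every sufficiently large $n$; the complement of $\LL(P,Q,p^{-r})$ in $\N$ will then be finite, and since $\LL(P,Q,p^{-r})$ is a semigroup by Lemma \ref{Lemma:IsSemigroup}, it will be a numerical semigroup.

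Since $p$ is special, $p \mid P$ and $p \mid Q$, so $\nu_p(P) \geq 1$ and $\nu_p(Q) \geq 1$. Applying $\nu_p$ to the recurrence $U_{n+2} = P U_{n+1} - Q U_n$ and using the ultrametric inequality, I would obtain
\[
\nu_p(U_{n+2}) \geq \min\{\nu_p(P) + \nu_p(U_{n+1}),\, \nu_p(Q) + \nu_p(U_n)\} \geq 1 + \min\{\nu_p(U_{n+1}),\, \nu_p(U_n)\}.
\]
Writing $v_n = \nu_p(U_n)$, the point is that $v_n$ itself need not increase at every step, so the right quantity to track is the running minimum $m_n = \min\{v_n, v_{n+1}\}$ of consecutive terms. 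The displayed bound says $v_{n+2} \geq 1 + m_n$; combining this with $v_{n+3} \geq 1 + m_{n+1} \geq 1 + m_n$ gives the two-step growth estimate $m_{n+2} \geq 1 + m_n$. Recognizing that $m_n$, and not $v_n$, is the quantity that grows monotonically is the main (and only mild) obstacle; the rest is bookkeeping.

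Finally I would pin down the base cases and iterate. Here $v_1 = \nu_p(1) = 0$ and $v_2 = \nu_p(P) \geq 1$, so $m_1 = 0$, while $v_3 \geq 1 + \min\{v_2, v_1\} = 1$ gives $m_2 \geq 1$. An induction on parity using $m_{n+2} \geq 1 + m_n$ then yields $m_n \geq \floor{n/2}$ for all $n \geq 1$. Consequently, whenever $n \geq 2r$ we have $m_n \geq r$, and since $v_n \geq m_n$ this forces $\nu_p(U_n) = v_n \geq r$, i.e., $n \in \LL(P,Q,p^{-r})$. Thus the complement of $\LL(P,Q,p^{-r})$ in $\N$ is contained in $\{0,1,\ldots,2r-1\}$ and is finite, so $\LL(P,Q,p^{-r})$ is a numerical semigroup.
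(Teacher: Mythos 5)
Your proof is correct, and it arrives at exactly the bound the paper uses, but by a genuinely self-contained route. The paper's proof is a one-liner: it cites \cite[Sec.~2.12, Thm.~42]{BallotBook} for the inequality $\nu_p(U_n) \geq \floor{n/2}$ ($n \geq 1$) and concludes $\{2r, 2r+1, \ldots\} \subseteq \LL(P,Q,p^{-r})$, just as you do. You instead derive that inequality from first principles: since $p \mid \gcd(P,Q)$, the ultrametric inequality applied to $U_{n+2} = PU_{n+1} - QU_n$ gives $v_{n+2} \geq 1 + \min\{v_{n+1}, v_n\}$, and your key observation --- that the running minimum $m_n = \min\{v_n, v_{n+1}\}$, rather than $v_n$ itself, is the monotone quantity, satisfying $m_{n+2} \geq 1 + m_n$ --- is exactly right and makes transparent why the growth rate is one factor of $p$ per two indices. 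Your base cases $m_1 = 0$ and $m_2 \geq 1$ are correct ($v_1 = 0$, $v_2 = \nu_p(P) \geq 1$, $v_3 \geq 1$), and the parity induction gives $v_n \geq m_n \geq \floor{n/2}$, hence the complement lies in $\{1,\ldots,2r-1\}$. One step you assert without comment is $m_{n+1} \geq m_n$, but it is immediate from $v_{n+1} \geq m_n$ and $v_{n+2} \geq 1 + m_n$, so this is not a gap. The trade-off: the paper's citation is shorter and connects the result to the general theory of $p$-adic valuations of Lucas sequences, which the rest of Section 5 relies on anyway; your argument costs a paragraph but eliminates the external dependency entirely, using nothing beyond the recurrence, the semigroup property from Lemma \ref{Lemma:IsSemigroup}, and the convention $\nu_p(0) = \infty$.
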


\begin{proof}
Since \cite[Sec.~2.12, Thm.~42]{BallotBook} ensures that $\nu_p(U_n) \geq \lfloor \frac{n}{2} \rfloor$ for $n \geq 1$, it follows that
$\{2r,2r+1,\ldots\}\subseteq \LL(P,Q,p^{-r})$.  Thus, $\N \setminus \LL(P,Q,p^{-r})$ is finite.
\end{proof}

Suppose $p$ is a special prime for $U_n$, in which $PQ \neq 0$.
Then $p \mid \gcd(P,Q)$ and we may write $P = p^a P'$ and $Q = p^b Q'$, in which $a,b \geq 1$ and $p \nmid P'Q'$.
We adhere to this notation throughout this section without further comment.

We consider three cases: $b > 2a$, $b=2a$, and $b < 2a$. 

\subsection{Special primes: Case 1}\label{Subsection:Special1} The first case, in which $b > 2a$, is straightforward: every Lucas semigroup in this case is an ordinary semigroup.

\begin{theorem}\label{Theorem:LocalSpecialLucasSGCase1Classifica}
    If $p$ is special, $b > 2a$, and $r\geq 1$, then
    $\LL(P,Q,p^{-r})=S_{\ceiling{\frac{r+a}{a}}}$.
\end{theorem}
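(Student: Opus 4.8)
The plan is to analyze the $p$-adic valuation $\nu_p(U_n)$ directly for a special prime with $b > 2a$, and show it crosses the threshold $r$ exactly at $n = \lceil (r+a)/a \rceil$. Since $\LL(P,Q,p^{-r}) = \{n \in \N : \nu_p(U_n) \geq r\}$ by \eqref{eq:LocalDefinition}, and Theorem \ref{Theorem:SpecialNumerical} already guarantees this is a numerical semigroup with finite complement, it suffices to compute $\nu_p(U_n)$ precisely for small $n$ and confirm that $\nu_p(U_n) \geq r$ holds for all $n$ at and beyond the claimed multiplicity, but fails just before it. Because the target $S_m$ is an ordinary semigroup (a tail), the key is to pin down the threshold index and then verify monotonicity of the valuation past that point.

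First I would derive a clean formula for $\nu_p(U_n)$ in the regime $b > 2a$. Writing $P = p^a P'$ and $Q = p^b Q'$ with $p \nmid P'Q'$, the recurrence $U_{n+2} = PU_{n+1} - QU_n$ becomes $U_{n+2} = p^a P' U_{n+1} - p^b Q' U_n$. The intuition is that since $b > 2a$, the $QU_n$ term is \emph{much} more $p$-divisible than the $PU_{n+1}$ term, so the recurrence is dominated by multiplication by $P$: heuristically $U_n \approx P^{n-1}$ in $p$-adic size, giving $\nu_p(U_n) = a(n-1)$. I would prove $\nu_p(U_n) = a(n-1)$ exactly by induction on $n$, using the non-archimedean property of $\nu_p$: if $\nu_p(PU_{n+1}) = a + a \cdot n = a(n+1)$ and $\nu_p(QU_n) = b + a(n-1) > 2a + a(n-1) = a(n+1)$, then the two terms have \emph{distinct} valuations, so $\nu_p(U_{n+2}) = \min = a(n+1) = a((n+2)-1)$, closing the induction. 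The base cases $\nu_p(U_1) = 0$ and $\nu_p(U_2) = \nu_p(P) = a$ anchor this.

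Once $\nu_p(U_n) = a(n-1)$ is established for $n \geq 1$, the characterization is immediate arithmetic: $\nu_p(U_n) \geq r \iff a(n-1) \geq r \iff n \geq 1 + r/a \iff n \geq \lceil (r+a)/a \rceil$, since $1 + r/a = (r+a)/a$ and $n$ is an integer. Together with $U_0 = 0$ (so $0 \in \LL$) and the fact that all indices below the threshold fail, this yields exactly $\LL(P,Q,p^{-r}) = \{0\} \cup \{m, m+1, \ldots\} = S_m$ with $m = \lceil (r+a)/a \rceil$, as claimed.

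\textbf{The main obstacle} I anticipate is justifying the strict inequality $\nu_p(QU_n) > \nu_p(PU_{n+1})$ at every step, which is precisely what forces equality (rather than mere inequality) in the ultrametric estimate and is what the hypothesis $b > 2a$ buys us; if the two valuations could coincide, cancellation might push $\nu_p(U_{n+2})$ strictly higher and break the clean linear formula. This is exactly the delicate point that separates Case 1 from the borderline case $b = 2a$ (Theorem \ref{Theorem:LocalSpecialLucasSGCase2Classifica}), where the dominant terms \emph{do} have equal valuation and genuine cancellation must be tracked. Here, though, the strict gap $b + a(n-1) > a(n+1)$ reduces to $b > 2a$, independent of $n$, so the induction runs uniformly and the obstacle dissolves cleanly.
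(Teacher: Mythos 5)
Your proof is correct, but it takes a genuinely different route at the one substantive point. The paper's proof is a two-line affair: it cites \cite[Thm.~1.2]{BallotPaper} for the exact formula $\nu_p(U_n) = (n-1)a$ in the regime $b > 2a$, and then performs the same ceiling arithmetic you do, $\nu_p(U_n) \geq r \iff n \geq \ceiling{\frac{r+a}{a}}$. You instead prove the valuation formula from scratch by a two-term induction on the recurrence: since $\nu_p(PU_{n+1}) = a(n+1)$ while $\nu_p(QU_n) = b + a(n-1) > a(n+1)$ exactly when $b > 2a$, the ultrametric property forces $\nu_p(U_{n+2}) = a(n+1)$ with no cancellation possible, anchored by $\nu_p(U_1) = 0$ and $\nu_p(U_2) = a$. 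Your inductive argument is sound (both base cases are needed and supplied, and the strict valuation gap is uniform in $n$), and it buys self-containedness plus a conceptual explanation of why $b > 2a$ is the precise hypothesis separating this case from the borderline case $b = 2a$ of Theorem \ref{Theorem:LocalSpecialLucasSGCase2Classifica}, where the dominant terms share a valuation and cancellation must be tracked through $\nu_p(U_n')$. What the paper's citation buys is brevity and consistency: the same reference \cite[Thm.~1.2]{BallotPaper} supplies the valuation formulas for all three special-prime cases ($b > 2a$, $b = 2a$, $b < 2a$), whereas your elementary induction works cleanly only in this first case. One small remark: your appeal to Theorem \ref{Theorem:SpecialNumerical} is harmless but unnecessary, since your exact formula already determines the full membership set, including that every $n \geq \ceiling{\frac{r+a}{a}}$ lies in $\LL(P,Q,p^{-r})$ and every $n$ with $1 \leq n < \ceiling{\frac{r+a}{a}}$ does not.
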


\begin{proof}
If $b > 2a$, then $\nu_p(U_n) = (n-1)a$ \cite[Thm.~1.2]{BallotPaper}.
Since
\begin{equation}\label{eq:SpecialChain}
\nu_p(U_n) \geq r
\iff (n-1)a \geq r
\iff n-1\geq \Big\lceil \frac{r}{a} \Big\rceil \iff n\geq \Big\lceil \frac{r+a}{a} \Big\rceil  ,
\end{equation}
we conclude that $\LL(P,Q,p^{-r}) = S_{\lceil \frac{r+a}{a} \rceil  }$.
\end{proof}

\subsection{Special primes: Case 2}\label{Subsection:Special2}

The second case, in which $b=2a$, is more complicated than the previous case.  
Maintain the notation of the previous subsection and let $\rho' = \rho'(p) \geq 2$ denote 
the rank of appearance of $p$ in $U'_n=U_n'(P',Q')$ and let $\nu' = \nu_p(U'_{\rho'})$ denote
the rank exponent of $p$ in $U_n'$; see Subsection \ref{Section:Local2}.  Recall that $\s(S)$ denotes the set of nonzero
small elements of $S$. Recall that $PQ \neq 0$.

\begin{theorem}\label{Theorem:LocalSpecialLucasSGCase2Classifica}
    If $p\geq 3$ is special, $b=2a$, and $r\geq 1$, then
    \begin{equation*}
    \LL(P,Q,p^{-r})=S_{\ceiling{\frac{r+a}{a}}}\cup\{n\in \semigroup{\rho'}:\nu_p\big(\tfrac{n}{\rho'}\big)\geq r+a-an-\nu'\}.
    \end{equation*}
    If $\rho'\geq \ceiling{\frac{r+a}{a}}$, then 
    $\LL(P,Q,p^{-r})=S_{\ceiling{\frac{r+a}{a}}}$.
\end{theorem}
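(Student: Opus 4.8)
The plan is to analyze the $p$-adic valuation $\nu_p(U_n)$ directly for the special prime case $b = 2a$, using the structure $P = p^a P'$, $Q = p^{2a} Q'$ with $p \nmid P'Q'$. First I would seek a formula relating $U_n = U_n(P,Q)$ to the regular Lucas sequence $U'_n = U_n(P',Q')$. Substituting $P = p^a P'$ and $Q = p^{2a} Q'$ into the recurrence $U_{n+2} = P U_{n+1} - Q U_n$ and comparing with $U'_{n+2} = P' U'_{n+1} - Q' U'_n$, I expect a clean scaling relation of the form $U_n = p^{a(n-1)} U'_n$ (this is easily checked by induction: both sides satisfy the same recurrence and match at $n=0,1$ after the substitution $U_n = p^{a(n-1)} U'_n$ gives $p^{a(n+1)}U'_{n+2} = p^a P' p^{an} U'_{n+1} - p^{2a}Q' p^{a(n-1)} U'_n$, which reduces to the $U'$ recurrence). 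Hence $\nu_p(U_n) = a(n-1) + \nu_p(U'_n)$ for $n \geq 1$.

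With this formula in hand, the membership condition becomes $\nu_p(U_n) \geq r \iff a(n-1) + \nu_p(U'_n) \geq r \iff \nu_p(U'_n) \geq r - a(n-1) = r + a - an$. Since $p$ is regular for $U'_n$ (as $p \nmid P'Q'$) and $p \geq 3$, Theorem \ref{Theorem:RegPrimeCyclicLocal}'s analysis applies: $\nu_p(U'_n) = 0$ when $\rho' \nmid n$, and $\nu_p(U'_n) = \nu' + \nu_p(n/\rho')$ when $\rho' \mid n$. I would split into these two subcases. When $\rho' \nmid n$, the condition reads $0 \geq r + a - an$, i.e. $an \geq r + a$, i.e. $n \geq \lceil (r+a)/a \rceil$; this recovers exactly the tail $S_{\lceil (r+a)/a\rceil}$ for those $n$ not divisible by $\rho'$. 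When $\rho' \mid n$, the condition becomes $\nu' + \nu_p(n/\rho') \geq r + a - an$, i.e. $\nu_p(n/\rho') \geq r + a - an - \nu'$, which is precisely the second set in the stated union.

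Assembling these two subcases yields
\begin{equation*}
\LL(P,Q,p^{-r}) = S_{\lceil (r+a)/a \rceil} \cup \{ n \in \semigroup{\rho'} : \nu_p(\tfrac{n}{\rho'}) \geq r + a - an - \nu' \},
\end{equation*}
where I should double-check that the tail $S_{\lceil(r+a)/a\rceil}$ already absorbs every $n \geq \lceil(r+a)/a\rceil$ regardless of divisibility by $\rho'$: for such large $n$ the right-hand bound $r + a - an - \nu'$ is negative, so any multiple of $\rho'$ in that range is automatically included, and the union correctly captures the ``extra'' small multiples of $\rho'$ below the tail. For the second assertion, I would observe that if $\rho' \geq \lceil (r+a)/a \rceil$, then the smallest candidate element of $\semigroup{\rho'}$ below the tail is $\rho'$ itself (taking $n = \rho'$, so $n/\rho' = 1$ and $\nu_p(1) = 0$), and the requirement $0 \geq r + a - a\rho' - \nu'$ is implied by $\rho' \geq \lceil(r+a)/a\rceil$; but $\rho' \geq \lceil(r+a)/a\rceil$ already places $\rho'$ inside the tail, so the extra set contributes nothing new and the union collapses to $S_{\lceil(r+a)/a\rceil}$.

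The main obstacle I anticipate is the careful bookkeeping at the boundary: verifying that the extra set $\{n \in \semigroup{\rho'} : \nu_p(n/\rho') \geq r+a-an-\nu'\}$ genuinely overlaps with or stays below the tail in the right way, so that the union is neither missing elements nor claiming spurious ones. In particular, I must confirm the scaling identity $U_n = p^{a(n-1)}U'_n$ holds with the correct exponent (an off-by-one in the exponent of $p$ would shift every subsequent inequality), and I must handle the edge case $n = 1$ where $U'_1 = 1$ gives $\nu_p(U'_1) = 0$ cleanly. Once the valuation formula is pinned down, the rest is a direct translation of inequalities, so the real content lies in establishing $\nu_p(U_n) = a(n-1) + \nu_p(U'_n)$ and correctly partitioning by $\rho' \mid n$.
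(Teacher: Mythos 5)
Your proposal is correct and takes essentially the same route as the paper: the paper's proof rests on the valuation formula $\nu_p(U_n)=(n-1)a+\nu_p(U_n')$ (quoted from Ballot's Theorem~1.2), which you rederive self-containedly via the scaling identity $U_n = p^{a(n-1)}U_n'$, and from there both arguments translate $\nu_p(U_n)\geq r$ into $\nu_p(U_n')\geq r+a-an$, split on whether $\rho'\mid n$ using the regular-prime formula (applicable since $p\geq 3$ and $p\nmid Q'$), and collapse the union to $S_{\lceil (r+a)/a\rceil}$ when $\rho'\geq \lceil (r+a)/a\rceil$ because every nonzero multiple of $\rho'$ then already lies in the tail. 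Your boundary bookkeeping matches the paper's observation that $\rho'\N\subseteq S_{\lceil(r+a)/a\rceil}$ in that case, so no step is missing.
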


\begin{proof}
If $b = 2a$, then
\begin{equation}\label{eq:TooSpecial}
\nu_p(U_n)=(n-1)a+\nu_p(U_n'),
\end{equation}
in which $U_n' = U_n'(P',Q')$ \cite[Thm.~1.2]{BallotPaper}.  
Since $p \nmid P'Q'$ and $P'Q'\neq 0$, it follows that $p$ is a regular prime for $U_n'$, so $\rho'$ and $\nu'$ are well defined.  Thus,
\begin{align}
    \LL(P,Q,p^{-r})
    &=\{n\in \N: \nu_p(U_n)\geq r\}  \nonumber \\
    &=\{n\in \N:(n-1)a+\nu_p(U_n') \geq r\} \nonumber \\
    &=\{n \in \N:\nu_p(U_n')\geq r+a-an\} \label{eq:FromHere}\\
        &=S_{\ceiling{\frac{r+a}{a}}}\cup \{n\in \semigroup{ \rho' }:\nu'+\nu_p\big(\tfrac{n}{\rho'}\big)\geq r+a-an\} \nonumber \\
    &=S_{\ceiling{\frac{r+a}{a}}}\cup\{n\in \semigroup{\rho' }:\nu_p\big(\tfrac{n}{\rho'}\big)\geq r+a-an-\nu'\} .\nonumber 
    \end{align}
In light of \eqref{eq:TooSpecial}, 
it follows that $g = g(\LL(P,Q,p^{-r}))\leq \big\lceil \frac{r}{a} \big\rceil$ since
\begin{equation*}
(n-1)a\geq r \iff n-1\geq \Big\lceil \frac{r}{a} \Big\rceil \iff n\geq \Big\lceil \frac{r}{a} \Big\rceil  + 1.
\end{equation*}
Because $p \nmid Q'$, we know that $\nu_p(U_n') >0$ if and only if $\rho \mid n$ \cite{BallotErrata}.
Since $p \nmid U_1=1$, we have $\rho(p)\geq 2$.
If $g = \left\lceil\frac{r}{a}\right\rceil$, we are done. 
Otherwise, $\left\lceil\frac{r}{a}\right\rceil \in \LL(P,Q,p^{-r})$ and hence
$g = \lceil\frac{r}{a}\rceil-1$ is the Frobenius number.    
    
If $\rho'\geq \ceiling{\frac{r+a}{a}}$, then $\rho'\N \subseteq S_{\ceiling{\frac{r+a}{a}}}$. Thus, $\LL(P,Q,p^{-r})=S_{\ceiling{\frac{r+a}{a}}}$.
\end{proof}

The next result says that 
the corresponding local Lucas semigroup is a finite union of scaled ordinary semigroups
$S_m(n) = S_m \cap \semigroup{n}$.  


\begin{corollary}\label{Corollary:Special2H}
If $p$ is special, $b=2a$, and $r \geq 1$, then
\begin{equation}\label{eq:LLC}
\LL(P,Q,p^{-r})=S_{ \ceiling{ \frac{r+a}{a}}} \cup \bigcup_{i=1}^{\ceiling{\frac{r+a}{a}}}S_i(h_i)
\end{equation}
is a numerical semigroup, in which
\begin{equation*}
h_i= \begin{cases}
 { \rho' p^{ (r+a-ai-\nu')_+}} & \text{if $p \geq 3$},\\[3pt]
 { 3 \cdot 2^{(r+a-ai - \nu_2(U'_3))_+} }& \text{if $p=2$ and $Q'\equiv 1\pmod{4}$},\\[3pt]
{ 3 } & \text{if $p=2$, $Q'\equiv -1\pmod{4}$, and $r+a-ai =1$},\\[3pt]
 { 6 \cdot 2^{(r
 +a-ai - \nu_2(U'_6) )_+ } } &\text{if $p=2$ and $Q'\equiv -1\pmod{4}$, and $r+a-ai\neq1$}.
 \end{cases}
\end{equation*}
Moreover, $\s(S) = \varnothing$ or $\gcd \s(S) \geq 2$.
\end{corollary}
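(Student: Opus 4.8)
The plan is to begin from identity \eqref{eq:TooSpecial}, namely $\nu_p(U_n) = (n-1)a + \nu_p(U_n')$ with $U_n' = U_n'(P',Q')$, so that $n \in \LL(P,Q,p^{-r})$ is equivalent to $\nu_p(U_n') \geq s_n$ where $s_n := r + a - an$. The quantity $s_n$ is strictly decreasing in $n$, and for integer $n$ one has $s_n \leq 0$ exactly when $n \geq M := \ceiling{\frac{r+a}{a}}$. Since $\nu_p(U_n') \geq 0$ always and $\nu_p(U_0') = \infty$, this immediately places $S_M$ inside the semigroup and reduces everything to the finitely many indices $1 \leq n < M$, for which $s_n \geq 1$.

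The crux is to convert the $n$-dependent condition $\nu_p(U_n') \geq s_n$ into a union of \emph{fixed} cyclic semigroups. Because $p$ is special and $p \nmid P'Q'$, the prime $p$ is regular for $U_n'$ and $P'Q' \neq 0$, so Theorem \ref{Theorem:RegPrimeCyclicLocal} applies to $U_n'$: for each threshold $s \geq 1$, the set $\{m \in \N : \nu_p(U_m') \geq s\}$ is a cyclic semigroup $\semigroup{h(s)}$ whose generator is read off from the four surviving cases there (since $2 \nmid P'Q'$ rules out the $p \mid Q'$ and $p = 2 \mid P'$ cases). Substituting $s = s_i = r + a - ai$ into those generators reproduces exactly the four formulas for $h_i$, the split $s = 1$ versus $s \neq 1$ matching the $p = 2$, $Q' \equiv -1 \pmod 4$ dichotomy. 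I would then prove \eqref{eq:LLC} by double inclusion, using monotonicity of $s_n$ as the lever: if $n \in \LL(P,Q,p^{-r})$ with $1 \leq n < M$ then $\nu_p(U_n') \geq s_n$ gives $n \in \semigroup{h_n} \cap S_n = S_n(h_n)$; conversely, for $1 \leq i < M$ any nonzero $n \in S_i(h_i)$ satisfies $n \geq i$ and $\nu_p(U_n') \geq s_i \geq s_n$, hence lies in $\LL(P,Q,p^{-r})$.

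The one remaining piece of bookkeeping is the boundary index $i = M$, where $s_M \leq 0$ and Theorem \ref{Theorem:RegPrimeCyclicLocal} no longer literally describes $\{m : \nu_p(U_m') \geq s_M\}$; here one simply notes $S_M(h_M) \subseteq S_M$, so including this term in the union is harmless and disturbs neither inclusion. That the result is a numerical semigroup is then immediate from Theorem \ref{Theorem:SpecialNumerical}. I expect this reconciliation of a moving threshold $s_n$ with a static union of cyclic pieces $\semigroup{h_i}$ — together with verifying that the $p = 2$ case split lines up — to be the main obstacle, though it is careful bookkeeping rather than a new idea.

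For the final assertion I would use that Theorem \ref{Theorem:LocalSpecialLucasSGCase2Classifica} bounds the Frobenius number by $g(S) \leq \ceiling{\frac{r}{a}} = M - 1$. Hence every small element $n \in \s(S)$ satisfies $n < M$, so $s_n \geq 1$ and therefore $\nu_p(U_n') \geq 1$. The rank-of-appearance characterization $\nu_p(U_n') > 0 \iff \rho' \mid n$ then forces $\rho' \mid n$ for every small element, where $\rho' \geq 2$. Consequently, if $\s(S) \neq \varnothing$ then $\rho'$ divides every element of $\s(S)$, whence $\gcd \s(S) \geq \rho' \geq 2$.
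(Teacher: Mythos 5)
Your proposal is correct and takes essentially the same route as the paper: reduce via \eqref{eq:TooSpecial} to the moving threshold $\nu_p(U_n')\geq r+a-an$, use monotonicity to decompose $\LL(P,Q,p^{-r})$ into the pieces $\LL(P',Q',p^{-(r+a-ai)})\cap S_i = S_i(h_i)$ with the generators $h_i$ read off from Theorem \ref{Theorem:RegPrimeCyclicLocal}, and deduce the $\gcd$ claim from the fact that every small element is divisible by $\rho'$ (with $\rho'=3$ when $p=2$). Your explicit handling of the boundary index $i=\ceiling{\frac{r+a}{a}}$ and your valuation-based phrasing of the final step are only cosmetic refinements of the paper's argument.
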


\begin{proof}
For $\ell \geq 1$, Theorem~\ref{Theorem:RegPrimeCyclicLocal} ensures that
\begin{equation*}
\LL(P',Q',p^{-\ell})= \begin{cases}
 \semigroup{ p^{ (\ell-\nu')_+}\rho'} & \text{if $p \geq 3$},\\[3pt]
 \semigroup{ 3 \cdot 2^{(\ell - \nu_2(U'_3))_+} }& \text{if $p=2$ and $Q'\equiv 1\pmod{4}$},\\[3pt]
\semigroup{ 3 } & \text{if $p=2$, $Q'\equiv -1\pmod{4}$, and $\ell =1$},\\[3pt]
 \semigroup{ 6 \cdot 2^{(\ell - \nu_2(U'_6) )_+ } } &\text{if $p=2$ and $Q'\equiv -1\pmod{4}$, and $\ell\neq1$}.
 \end{cases}
\end{equation*}
Let $\LL(P',Q',p^{-(r+a-ai)})=\semigroup{h_i}$. 
For $i \in \Z$,
\begin{equation*}
r+a-ai \geq 0
\iff \frac{r+a}{a} \geq i
\iff \Big\lceil \frac{r+a}{a} \Big\rceil \geq i.
\end{equation*}
Resume from \eqref{eq:FromHere} and get
\begin{align*}
    \LL(P,Q,p^{-r})
    &=\{n \in \N:\nu_p(U_n')\geq r+a-an\} \\
    &= \bigcup_{i=0}^{ \ceiling{ \frac{r+a}{a} } } \{n \in \N : \text{$n \geq i$ and  $\nu_p(U_n')\geq r+a-ai$} \}  \\
    &= \bigcup_{i=0}^{ \ceiling{ \frac{r+a}{a} } } \LL(P',Q',p^{-(r+a-ai)}) \cap S_i
    = \bigcup_{i=0}^{ \ceiling{ \frac{r+a}{a} } } \semigroup{h_i}  \cap S_i  \\
    &=\bigcup_{i=0}^{ \ceiling{ \frac{r+a}{a} } } S_i(h_i).  
\end{align*}
Suppose that $\s(S) \neq \varnothing$.
Then every small element of $S$ is a multiple of some $h_i$.  
If $p\geq 3$, then $2 \leq \rho' \mid \gcd \s(S)$.
If $p = 2$, then $3 \mid \gcd \s(S)$.
\end{proof}

\begin{remark}\label{Remark:DecompSmalElemsSpPCase2}
If $h_i\geq \ceiling{\frac{r}{a}}+1$, then $h_i$ exceeds the Frobenius number of \eqref{eq:LLC}.  
\end{remark}

\subsection{Special primes: Case 3}\label{Subsection:Special3}
The third and final case in the special-primes setting is $b < 2a$.
Recall that $U = U_n(P,Q)$, in which $PQ \neq 0$ and $p \mid \gcd(P,Q)$;
that is, $p$ is special.  We write $P=p^aP'$ and $Q=p^bQ'$, in which $p \nmid P'Q'$.  

Since $b < 2a$ and $n \geq 0$, \cite[Thm.~1.2]{BallotPaper} ensures that
$\nu_p(U_{2n+1}) = bn$ and 
\begin{equation}\label{eq:EvenSnoopy}
\nu_p(U_{2n}) = bn + (a - b) + \nu_p(n) + \lambda_n,
\end{equation}
in which
\begin{equation*}
\lambda_n = 
\begin{cases} 
\nu_p(P'^2 - Q') & \text{if $2 \leq p \leq 3$,\, $2a = b + 1$, and $p \mid n$},\\[3pt]
0 & \text{otherwise}.
\end{cases}
\end{equation*}

\begin{theorem}\label{Theorem:LocalSpecialLucasSGCase3Classifica}
If $p$ is special, $b<2a$, and $r \geq 1$, then
\begin{equation}\label{eq:SplitEvenOdd}
\LL(P,Q,p^{-r})=2T \cup S_{2\ceiling{\frac{r}{b}}}(1;2),
\end{equation}
in which $T$ is the numerical semigroup
\begin{enumerate}[leftmargin=*]
\item $S_{ \ceiling{\frac{r-a+b}{b}}_+}\cup \semigroup{p}$ if $2\leq p\leq 3$,\,  $2a=b+1$, and $P'^2-Q' = 0$,
\item $S_{\lceil \frac{r-a+b}{b} \rceil_+}\cup\{n\in \semigroup{p}:\nu_p(n)\geq r-a+b-\nu_p(P'^2-Q')-bn\} $
if $2\leq p\leq 3$,\, $2a=b+1$, and $P'^2-Q' \neq 0$,
\item $S_{\lceil \frac{r-a+b}{b} \rceil_+}\cup\{n\in \semigroup{p}:\nu_p(n)\geq r-a+b-bn\}$ otherwise.
\end{enumerate}
The Frobenius number of $S$ is $2\big\lceil\frac{r}{b}\big\rceil$ or $2\big\lceil\frac{r}{b}\big\rceil-1$.
Moreover, $\s(S) = \varnothing$ or $\gcd \s(S) \geq 2$.
\end{theorem}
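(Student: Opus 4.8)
The plan is to compute $\nu_p(U_n)$ separately on odd and even indices using the formulas recorded just before the statement, namely $\nu_p(U_{2n+1}) = bn$ and \eqref{eq:EvenSnoopy}, and then read off which indices satisfy $\nu_p(U_n) \geq r$. Since $\LL(P,Q,p^{-r}) = \{n : \nu_p(U_n) \geq r\}$ by \eqref{eq:LocalDefinition}, an odd index $2m+1$ lies in the semigroup exactly when $bm \geq r$, i.e. $m \geq \ceiling{r/b}$; these are precisely the odd elements of $S_{2\ceiling{r/b}}$, accounting for the summand $S_{2\ceiling{r/b}}(1;2)$. For an even index $n = 2m$, substituting \eqref{eq:EvenSnoopy} turns $\nu_p(U_{2m}) \geq r$ into $\nu_p(m) + \lambda_m \geq r - a + b - bm$. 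Setting $T = \{m \in \N : \nu_p(m) + \lambda_m \geq r-a+b-bm\}$, the even part of the semigroup is exactly $2T$, which gives the decomposition \eqref{eq:SplitEvenOdd}. Because $\LL(P,Q,p^{-r})$ is a semigroup (Lemma \ref{Lemma:IsSemigroup}), $m_1,m_2 \in T$ forces $2(m_1+m_2) \in \LL(P,Q,p^{-r})$, so $T$ is a semigroup; it is cofinite, hence numerical, once the tail below is exhibited.

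Next I would identify $T$ by splitting on divisibility by $p$ and on the three regimes governing $\lambda_m$. For $m$ with $p \nmid m$ one has $\nu_p(m) = \lambda_m = 0$, so membership reduces to $bm \geq r-a+b$, i.e. $m \geq \ceiling{(r-a+b)/b}$; together with $m=0$ this contributes the tail $S_{\ceiling{\frac{r-a+b}{b}}_+}$, the $(\cdot)_+$ absorbing the case $r-a+b \leq 0$ where the condition is vacuous. For $m$ with $p \mid m$, i.e. $m \in \semigroup{p}$, the relevant $\lambda_m$ depends on the regime: in the generic case (c) one has $\lambda_m = 0$, so the condition is $\nu_p(m) \geq r-a+b-bm$; when $2 \leq p \leq 3$ and $2a = b+1$ one has $\lambda_m = \nu_p(P'^2-Q')$, which is finite and merely shifts the threshold, giving case (b), or is $+\infty$ exactly when $P'^2-Q' = 0$, in which case every multiple of $p$ lies in $T$ and one obtains $\semigroup{p}$ as in case (a). The multiples of $p$ already forced into $T$ by the tail cause no trouble, since the stated sets are written as unions; matching the pieces yields the three displayed formulas for $T$.

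Finally I would pin down the Frobenius number and the loneliness statement. The key estimate is $\nu_p(U_{2m}) \geq bm + (a-b)$, so $m \geq \ceiling{r/b}+1$ gives $\nu_p(U_{2m}) \geq r+a > r$ and hence $2m \in S$; thus every even number exceeding $2\ceiling{r/b}$ belongs to $S$, and the largest even non-element is at most $2\ceiling{r/b}$, attained precisely when $\ceiling{r/b} \notin T$. On the odd side the smallest element of $S$ is $2\ceiling{r/b}+1$, so the largest odd non-element is $2\ceiling{r/b}-1$. Comparing the two shows $g(S) = 2\ceiling{r/b}$ when $\ceiling{r/b} \notin T$ and $g(S) = 2\ceiling{r/b}-1$ otherwise. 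For the gcd claim, observe that the smallest odd element $2\ceiling{r/b}+1$ exceeds $g(S)$ in both cases, so $S$ has no odd small elements; hence every element of $\s(S)$ is even, giving $\gcd \s(S) \geq 2$ whenever $\s(S) \neq \varnothing$.

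The main obstacle I expect is the bookkeeping in the second paragraph: correctly merging the $p \nmid m$ tail with the $p \mid m$ constraint, handling the $(\cdot)_+$ truncation when $r-a+b \leq 0$, and especially the degenerate regime $P'^2-Q' = 0$, where $\lambda_m = +\infty$ collapses the constraint on multiples of $p$ to all of $\semigroup{p}$. Verifying that the union-of-tail-and-$\semigroup{p}$ descriptions in cases (a)--(c) reproduce $T$ exactly, with no spurious or missing elements near the threshold, is where the care is needed.
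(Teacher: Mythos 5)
Your proposal is correct and follows essentially the same route as the paper's own proof: the identical odd/even split using $\nu_p(U_{2n+1})=bn$ and \eqref{eq:EvenSnoopy}, the same set $T=\{n\in\N:\nu_p(U_{2n})\geq r\}$ analyzed via the tail for $p\nmid n$ and the three $\lambda_n$ regimes for $p\mid n$ (including $\lambda_n=\infty$ when $P'^2-Q'=0$), and the same Frobenius bound and even-small-elements argument for $\gcd\s(S)\geq 2$. Your observation that $g(S)=2\ceiling{\frac{r}{b}}$ precisely when $\ceiling{\frac{r}{b}}\notin T$ is in fact slightly sharper than the paper's dichotomy, but otherwise the two arguments coincide step for step.
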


\begin{proof}
Let $S=\LL(P,Q,p^{-r})$.
Since
    \begin{align*}
        \nu_p(U_{2n+1})=bn\geq r &\iff n\geq \frac{r}{b}
        \iff n\geq \Big\lceil \frac{r}{b} \Big\rceil,
    \end{align*}
we get $ S_{2\ceiling{\frac{r}{b}}}(1;2)\subseteq \LL(P,Q,p^{-r})$
and $2\lceil \frac{r}{b}\rceil-1 \notin \LL(P,Q,p^{-r})$.
Next observe that 
\begin{equation*}
\nu_p(U_{2n}) = b(n-1)+a+\nu_p(n)+\lambda_n
\end{equation*}
by \eqref{eq:EvenSnoopy}.  In particular, the odd integers in $S$ are precisely
\begin{equation}\label{eq:OddIntsThere}
\{2\lceil \tfrac{r}{b}\rceil +1, \,\, 2\lceil \tfrac{r}{b}\rceil +3, \,\, 2\lceil \tfrac{r}{b}\rceil +5, \ldots \}
=S_{2\ceiling{\frac{r}{b}}}(1;2). 
\end{equation}

We now consider the even integers in $S$.
If $n \geq \lceil \frac{r}{b}\rceil+1$, then 
\begin{equation*}
\nu_p(U_{2n}) \geq b \Big\lceil \frac{r}{b} \Big\rceil+a+\nu_p(n)+\lambda_n\geq r,
\end{equation*}
so $S_{2\ceiling{ \frac{r}{b} }+1}\subseteq S$.
Since $2\lceil \frac{r}{b}\rceil-1 \notin S$, we have
$g(S) = 2\lceil \frac{r}{b}\rceil$ or $2\lceil \frac{r}{b}\rceil-1$. 
Define 
\begin{equation*}
T =\{n\in \N:2n\in S\}=\{n\in \N:\nu_p(U_{2n})\geq r\},
\end{equation*}
so \eqref{eq:SplitEvenOdd} holds. We claim that $T $
is a numerical semigroup.  Since it contains $0$ and is closed under addition, it suffices to show it
has finite complement in $\N$.

By \eqref{eq:EvenSnoopy} we have 
$\nu_p(U_{2n}) =bn+a-b+\nu_p(n)+\lambda_n\geq bn+a-b$. Therefore,
\begin{align*}
    bn+a-b\geq r
    &\iff bn\geq r-a+b
    \iff n\geq \frac{r-a+b}{b}
    \iff n\geq  \Big\lceil \frac{r-a+b}{b} \Big\rceil_+,
\end{align*}
so $S_{(\lceil \frac{r-a}{b} \rceil +1)_+}\subseteq T $ and hence $T $ has finite complement in $\N$.
If $p \nmid n$, then $\nu_p(n)+\lambda_n=0$ and the above completely characterizes
of the even elements in $S$.

Suppose henceforth that $p\mid n$. Then
\begin{equation}\label{eq:necCondPMidNSpPCase3}
    bn + (a - b) + \nu_p(n) + \lambda_n \geq r 
   \iff  bn+\nu_p(n) \geq r-a+b-\lambda_n.
\end{equation}

\smallskip\noindent\textbf{Case 1.} If $2\leq p\leq 3$,  $ 2a=b+1$, and $ P'^2-Q'=0$, then $\lambda_n=\nu_p(P'^2-Q')=\nu_p(0)=\infty$, so \ref{eq:necCondPMidNSpPCase3} is vacuously true and $\semigroup{p}\subseteq T$. Therefore,
$T=S_{ \ceiling{\frac{r-a+b}{b}}_+}\cup \semigroup{p}$.

\smallskip\noindent\textbf{Case 2.} 
If $2\leq p\leq 3$,  $ 2a=b+1$, and $ P'^2-Q' \neq 0$, then
\begin{align*}
    T \cap \semigroup{p}&=\{n\in \N: bn + (a - b) + \nu_p(n) + \lambda_n \geq r\} \cap \semigroup{p}\\
    &=\{n\in \semigroup{p}:\nu_p(n)\geq r-a+b-\nu_p(P'^2-Q')-bn\},
\end{align*}
so $T$ is the union of $S_{\lceil \frac{r-a+b}{b} \rceil}$ with the set above.

\smallskip\noindent\textbf{Case 3.} Suppose that Cases 1 and 2 do not apply. 
Then $\lambda_n=0$ for all $n \in \N$ and, in a similar manner to the case above, 
\begin{equation*}
    T 
    =S_{\lceil \frac{r-a+b}{b} \rceil}\cup\{n\in \semigroup{p}:\nu_p(n)\geq r-a+b-bn\}.  
\end{equation*}

To finish, observe that $\s(S)$ contains no odd integers. This is because the first odd integer in $S$ is $2\ceiling{\frac{r}{b}}+1$ and $S_{2\ceiling{\frac{r}{b}}+1}\subseteq S$. Therefore, $\s(S)$ contains only even integers. Hence, $2 \mid \gcd \s(S)$.
\end{proof}

\begin{remark}
One can show that the numerical semigroup $T$ in Theorem \ref{Theorem:LocalSpecialLucasSGCase3Classifica} is a 
local Lucas semigroup in the second case of special primes (Theorem \ref{Theorem:LocalSpecialLucasSGCase2Classifica}).
\end{remark}

\section{Classification of dimension-$2$ realizable semigroups}\label{Section:Realizable}

Our aim is to describe semigroups of matricial dimension at most $2$; that is, the semigroups
that are dimension-$2$ realizable.  Recall that $S \subseteq \N$ is such a semigroup if 
$S = \S(A)$ for some $A\in \M_d(\Q)$ with $d \leq 2$.  Since the only semigroups that occur
for $d=1$ are the trivial semigroups $\{0\}$ and $\N$, we focus on $d=2$.  This usage of $d$
should not be confused with the $(2,2)$ entry of the matrix $A$ below.  Recall that $\S(A) \neq \{0\}$
implies that the characteristic polynomial of $A$ has integer coefficients \cite[Thm.~3.1]{NSRM1}.

\begin{proposition}\label{Proposition:General2DimReduction}
Let $A = \tinymatrix{a}{b}{c}{d} \in \M_2(\Q)$ with characteristic polynomial $p_A(x) \in \Z[x]$ and $b \neq 0$.
Let $U_n$ denote the Lucas sequence
$U_n = P U_{n-1} - Q U_{n-2}$,
$U_1 = 1$, and $U_0 = 0$,
in which $P = \tr A$ and $Q = \det A \neq 0$.  Then
\begin{equation*}
\S(A) 
=\LL(P,Q,a)\cap\LL(P,Q,b)\cap\LL\big(P,Q,\tfrac{p_A(a)}{b}\big).
\end{equation*}
\end{proposition}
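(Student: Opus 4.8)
The plan is to compute the powers $A^n$ explicitly in terms of the Lucas sequence $U_n = U_n(P,Q)$ and then read off the condition $A^n \in \M_2(\Z)$ entrywise. The standard tool is the identity
\begin{equation*}
A^n = U_n A - Q U_{n-1} I,
\end{equation*}
valid for $n \geq 1$, which follows by induction from the Cayley--Hamilton relation $A^2 = PA - QI$ (note $p_A(x) = x^2 - Px + Q$ since $P = \tr A$ and $Q = \det A$). First I would establish this formula, taking care of the base cases $n=0,1$ separately; the $n=0$ case gives $A^0 = I \in \M_2(\Z)$, consistent with $0 \in \S(A)$ and with each Lucas semigroup containing $0$.

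Next I would write out the four entries of $A^n = U_n \tinymatrix{a}{b}{c}{d} - Q U_{n-1} I$ explicitly:
\begin{align*}
(A^n)_{11} &= a U_n - Q U_{n-1}, & (A^n)_{12} &= b U_n, \\
(A^n)_{21} &= c U_n, & (A^n)_{22} &= d U_n - Q U_{n-1}.
\end{align*}
The condition $A^n \in \M_2(\Z)$ is the conjunction of the integrality of these four entries. The key observation is that the off-diagonal entry $(A^n)_{12} = b U_n \in \Z$ is exactly the condition $n \in \LL(P,Q,b)$ by definition \eqref{eq:LucasSemigroup}, and similarly the $(1,1)$ entry handles one of the remaining factors. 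To pin down the factor $\LL(P,Q,\tfrac{p_A(a)}{b})$, I would use the relation $Q U_{n-1} = a U_n - (A^n)_{11}$ together with the trace identity $d = P - a$, so that the diagonal entries can be expressed in terms of $a U_n$ and $(A^n)_{11}$; eliminating $U_{n-1}$ and using $p_A(a) = a^2 - Pa + Q = (a-a)(\cdots)$ rewritten via $p_A(a) = Q - a d + \ldots$ should produce the third condition $\tfrac{p_A(a)}{b} U_n \in \Z$.

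The main obstacle I anticipate is the bookkeeping that shows the four entrywise integrality conditions collapse to exactly the \emph{three} stated Lucas-semigroup conditions rather than four, and that no condition is lost or spuriously added. Concretely, once $b U_n \in \Z$ and $a U_n - Q U_{n-1} \in \Z$ are imposed, I must verify that integrality of the $(2,1)$ and $(2,2)$ entries is equivalent to the single extra condition $\tfrac{p_A(a)}{b} U_n \in \Z$; the trace relation $a + d = P \in \Z$ makes the $(2,2)$ condition follow from the $(1,1)$ one, and eliminating $Q U_{n-1}$ between the $(1,1)$ and $(2,1)$ entries should produce $c U_n$ and $a U_n$ in a combination governed by $p_A(a)$. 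Since $b \neq 0$ is assumed, dividing through by $b$ is legitimate, and I would express $c$ in terms of $a,b,d$ via $\det A = ad - bc = Q$, i.e.\ $bc = ad - Q$, to confirm that $c U_n \in \Z$ is subsumed. Finally, Lemma \ref{Lemma:FixedPQClosure} guarantees the threefold intersection is itself a single Lucas semigroup, but here the three explicit factors are what we want, so I would simply present $\S(A)$ as that intersection.
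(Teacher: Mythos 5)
Your overall strategy coincides with the paper's: prove $A^n = U_n A - Q U_{n-1} I$ by induction from Cayley--Hamilton, read off the four entries of $A^n$, and use the trace and determinant relations $d = P - a$ and $ad - bc = Q$ to collapse the entrywise integrality conditions to the three stated Lucas conditions (indeed $c = \frac{ad-Q}{b} = -\frac{a^2 - Pa + Q}{b} = -\frac{p_A(a)}{b}$, which is exactly where the factor $\LL\big(P,Q,\frac{p_A(a)}{b}\big)$ comes from). But you are missing the one observation that makes the ``bookkeeping'' you worry about trivial, and the substitute maneuver you propose does not work. Since $p_A(x) \in \Z[x]$ forces $P, Q \in \Z$, the recurrence with $U_0 = 0$ and $U_1 = 1$ gives $U_n \in \Z$ for all $n$; hence $Q U_{n-1} I \in \M_2(\Z)$ for every $n$, so $A^n \in \M_2(\Z)$ if and only if $U_n A \in \M_2(\Z)$, i.e., if and only if $aU_n, bU_n, cU_n, dU_n \in \Z$. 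Then $dU_n = PU_n - aU_n$ shows the $(2,2)$ condition is equivalent to the $(1,1)$ condition $aU_n \in \Z$, and $cU_n \in \Z$ is the third factor. That is the entire proof.

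By contrast, your plan to ``eliminate $Q U_{n-1}$ between the $(1,1)$ and $(2,1)$ entries'' cannot produce anything, because the $(2,1)$ entry is $cU_n$ and contains no $U_{n-1}$ term at all; and eliminating $QU_{n-1}$ between the two diagonal entries yields only $(a-d)U_n \in \Z$, strictly weaker than the needed $aU_n \in \Z$ (when $a = d$ it says nothing). Likewise, taking the $(1,1)$ condition at face value, as you do in your final paragraph, gives $aU_n - QU_{n-1} \in \Z$, which is not the definition of $n \in \LL(P,Q,a)$ until one knows $QU_{n-1} \in \Z$. So the line ``$U_n \in \Z$ for all $n$ because $U_0, U_1, P, Q \in \Z$'' is not a formality: it is precisely the step your elimination scheme was trying, unsuccessfully, to route around, and it is the first sentence of the paper's proof. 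Two smaller points: the fragment $p_A(a) = (a-a)(\cdots)$ is garbled (the clean computation is the determinant route above, which you also state correctly), and Lemma~\ref{Lemma:FixedPQClosure} plays no role in this proposition --- the paper invokes it only later, in Theorem~\ref{Theorem:GlobalIff}, to merge the three factors into a single $\LL(P,Q,R)$.
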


\begin{proof}
First observe that $U_n \in \Z$ for all $n \in \N$ since $U_0,U_1,P,Q \in \Z$ and 
note that $p_A(x) = x^2 - Px + Q$ \cite[Thm.~10.1.3]{SCLA}, in which $P,Q\in \Z$ by hypothesis.
We use induction to prove that
\begin{equation}\label{eq:MatrixRecurrence}
A^n= U_n A - Q U_{n-1} I
\quad \text{for $n \in \N$}.
\end{equation}
Note that $A^1 = 1 A - 0 I = U_1 A - Q U_0 I$ and, by the Cayley--Hamilton theorem \cite[Thm.~11.2.1]{SCLA},
$A^2 = P A - Q I = U_2 A - Q U_1 I$
since $U_2 = P$.  For our inductive hypothesis, suppose \eqref{eq:MatrixRecurrence} holds for some $n$.
Then it holds for $n+1$ since
\begin{align*}
A^{n+1} 
&= AA^n = A(U_n A - Q U_{n-1} I) \\
&= U_n A^2 - Q U_{n-1} A 
= U_n (P A - Q  I) - Q U_{n-1} A\\
&= (P U_n - Q U_{n-1}) A - U_n Q I 
= U_{n+1} A - U_n Q I.
\end{align*}
Thus,
\begin{equation*}
A^n = U_n \tinymatrix{a}{b}{c}{d} - Q U_{n-1} \tinymatrix{1}{0}{0}{1} 
= \tinymatrix{ U_n a - Q U_{n-1} }{ U_n b }{ U_n c }{ U_n d - Q U_{n-1} }.
\end{equation*}
Since $P = \tr A = a+d \in \Z$ and $Q= \det A = ad-bc \in \Z$, we have
$d = P-a$ and
\begin{equation*}
c = -\frac{Q - ad}{b} = -\frac{Q - a(P-a)}{b} = -\frac{a^2 - P a + Q}{b} = -\frac{p_A(a)}{b}.
\end{equation*}
Since $Q, U_{n-1} \in \Z$, we conclude
\begin{align*}
n \in \S(A) 
&\iff A^n \in \M_2(\Z)
\iff U_n A - Q U_{n-1} I \in \M_2(\Z) \\
&\iff U_n A  \in \M_2(\Z) 
\iff U_n \tinymatrix{a}{b}{c}{d} \in \M_2(\Z)\\
&\iff U_n \Big[ \begin{smallmatrix} a & b \\ -\frac{p_A(a)}{b} & P-a \end{smallmatrix}\Big] \in \M_2(\Z) \\
&\iff U_n a , \, U_n b , \, U_n \tfrac{p_A(a)}{b} \in \Z \\
&\iff n \in \LL(P,Q,a)\cap\LL(P,Q,b)\cap\LL\big(P,Q,\tfrac{p_A(a)}{b}\big).\qedhere
\end{align*}
\end{proof}

\begin{theorem}\label{Theorem:GlobalIff}
An additive semigroup in $\N$ is dimension-$2$ realizable if and only if it is a Lucas semigroup.
\end{theorem}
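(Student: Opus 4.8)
The statement is an ``if and only if'', so the plan is to prove the two directions separately, and both directions should follow almost immediately from the machinery already assembled in the excerpt.

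For the direction ``dimension-$2$ realizable $\implies$ Lucas,'' I would start from a semigroup $S = \S(A)$ with $A \in \M_2(\Q)$. There are two trivial cases to dispatch first: if $\S(A) = \{0\}$, then $S = \{0\} = \LL(P,Q,R)$ for suitable parameters (e.g.\ via Theorem~\ref{Theorem:PQZero}, Case~1), and if $A$ is such that $\S(A) = \N$ then $S = \N = \LL(P,Q,R)$ for any $R \in \Z$. Otherwise $\S(A) \neq \{0\}$, so by \cite[Thm.~3.1]{NSRM1} the characteristic polynomial $p_A(x) \in \Z[x]$, and we set $P = \tr A$, $Q = \det A$. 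If the off-diagonal entry $b \neq 0$, then Proposition~\ref{Proposition:General2DimReduction} writes $\S(A)$ as a triple intersection of Lucas semigroups sharing the parameters $P,Q$; by Lemma~\ref{Lemma:FixedPQClosure} this intersection is again a single Lucas semigroup $\LL(P,Q,R)$, so $S$ is Lucas. The only remaining subcase is $b = 0$ (equivalently, $A$ is lower-triangular, and by symmetry $c = 0$ is handled the same way), which the proposition does not cover; here I would argue directly that $A^n \in \M_2(\Z)$ reduces to a condition on powers of the diagonal entries, giving a semigroup of the form $\{n : a^n, d^n \in \Z\}$, which one checks is realized as a Lucas (indeed local Lucas) semigroup with $Q = 0$ via Theorem~\ref{Theorem:PQZero}.

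For the converse, ``Lucas $\implies$ dimension-$2$ realizable,'' the work has essentially been done by the classification corollaries. Every Lucas semigroup decomposes as an intersection of local Lucas semigroups \eqref{eq:Intersection}, and the three main cases---$PQ = 0$, $p$ regular, and $p$ special---are classified in Corollary~\ref{Corollary:PQZero}, Corollary~\ref{Corollary:Regular}, and the Case~III theorems. In the first two cases the corollaries explicitly exhibit $2\times 2$ rational realizing matrices (Table~\ref{Table:Realize1} and the matrices in the proof of Corollary~\ref{Corollary:Regular}), so those Lucas semigroups are dimension-$2$ realizable by definition. What remains is to confirm that the mixed Lucas semigroups---arbitrary finite intersections across different parameter regimes---are still realizable, which follows because any Lucas semigroup is, by construction, the exponent semigroup of a single rational matrix: indeed the forward direction together with Lemma~\ref{Lemma:FixedPQClosure} shows the correspondence is exact.

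I expect the main obstacle to be the $b = 0$ (triangular) case of the forward direction, since it is precisely the case Proposition~\ref{Proposition:General2DimReduction} excludes, and one must verify by hand that such an $\S(A)$ still lands among the Lucas semigroups; the rest is bookkeeping that invokes the prior results. A secondary point of care is ensuring the converse covers \emph{every} Lucas semigroup and not merely the local ones, which is why the appeal to \eqref{eq:Intersection} together with Lemma~\ref{Lemma:FixedPQClosure} and the explicit realizing matrices must be stitched together cleanly.
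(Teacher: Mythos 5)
Your forward direction follows the paper's route in outline, but it breaks at exactly the point you flagged as delicate. Your proposed treatment of $b=0$ is wrong: a nondiagonal lower-triangular matrix does \emph{not} reduce to the condition $\{n : a^n, d^n \in \Z\}$. Once $p_A(x) \in \Z[x]$, the diagonal entries $a,d$ are rational roots of a monic integer polynomial, hence integers, so that condition is automatically all of $\N$; the true obstruction is the off-diagonal entry of $A^n$, which evolves as $cU_n$. Concretely, $A = \tinymatrix{1}{0}{1/2}{1}$ has $A^n = \tinymatrix{1}{0}{n/2}{1}$, so $\S(A) = \semigroup{2}$, whereas your criterion would return $\N$. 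The paper avoids the triangular case altogether via the observation $\S(A) = \S(A^{\T})$, which lets one assume $b \neq 0$ for every nondiagonal $A$ (the genuinely diagonal case does give $\{0\}$ or $\N$). A second, smaller overreach: Proposition \ref{Proposition:General2DimReduction} carries the hypothesis $Q = \det A \neq 0$, so citing it for all $b \neq 0$ is out of scope when $Q = 0$; the paper disposes of $Q=0$ directly ($A^n = P^{n-1}A$ when $P \neq 0$, and $A^n = 0$ for $n \geq 2$ when $P=Q=0$), landing in the semigroups of Theorem \ref{Theorem:PQZero}.

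The converse, however, contains the fatal gap: your closing claim that ``any Lucas semigroup is, by construction, the exponent semigroup of a single rational matrix: indeed the forward direction together with Lemma \ref{Lemma:FixedPQClosure} shows the correspondence is exact'' is circular. The forward direction only shows that exponent semigroups are Lucas — the opposite inclusion — and Lemma \ref{Lemma:FixedPQClosure} merely collapses an intersection of Lucas semigroups with common $(P,Q)$ into a single Lucas semigroup; neither produces a matrix. Nor does the classification route you sketch close the hole: Table \ref{Table:Realize1} and Corollary \ref{Corollary:Regular} do exhibit matrices for the $PQ=0$ and regular-prime local semigroups, but the special-prime classifications (Theorems \ref{Theorem:LocalSpecialLucasSGCase1Classifica}, \ref{Theorem:LocalSpecialLucasSGCase2Classifica}, \ref{Theorem:LocalSpecialLucasSGCase3Classifica}) supply no realizing matrices, and nothing you cite realizes intersections across different primes or parameter regimes. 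The missing idea — the entire content of the paper's converse — is an explicit companion-type construction: for $PQ \neq 0$, write $R = N/D$ in lowest terms and set $A = \tinymatrix{0}{1/D}{-QD}{P}$. Then in the notation of Proposition \ref{Proposition:General2DimReduction} one has $a = 0$ and $p_A(a)/b = QD \in \Z$, so two of the three Lucas factors are all of $\N$ and $\S(A) = \LL(P,Q,\tfrac{1}{D}) = \LL(P,Q,R)$ by \eqref{eq:OnlyDemon}; the cases $R \in \Z$ (giving $\N$) and $PQ = 0$ (Corollary \ref{Corollary:PQZero}, as you correctly say) finish the argument. Without this single matrix your converse establishes nothing beyond the local cases already realized earlier in the paper.
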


\begin{proof}
$(\Rightarrow)$
We claim that every dimension-$2$ realizable semigroup in $\N$ is a Lucas semigroup.
Since $\N =  \LL(0,0,1)$ and $\{0\} = \LL(2,0,\frac{1}{3})$ are Lucas semigroups, we can ignore them when they arise.
If $A\in \M_2(\Q)$ is diagonal, then $\S(A) = \{0\}$ or $\N$, so suppose that $A = \tinymatrix{a}{b}{c}{d} \in \M_2(\Q)$ is nondiagonal.  Since $\S(A) = \S(A^{\T})$, we may assume $b \neq 0$.
We can also assume $p_A(x) \in \Z[x]$, since otherwise $\S(A)= \{0\}$ \cite[Thm.~3.1]{NSRM1}.  Let $P = \tr A$ and $Q = \det A$, so that $p_A(x) = x^2 - Px + Q$.
\begin{enumerate}[leftmargin=*]
\item If $P\neq 0$ and $Q = 0$, then $A^n = P^{n-1} A$ for $n\geq 1$.
Thus, $\S(A) = \{0\}$, $\N$, or $S_n = \LL(2,0,\frac{1}{2^{n-1}})$ (Theorem~\ref{Theorem:PQZero}) for some $n \geq 2$. 

\item If $P = Q = 0$, then Cayley--Hamilton and induction ensure that $A^n = 0$ for $n\geq 2$.  Thus,
$\S(A) = \N$ or $\S(A) = \semigroup{2,3} = \LL(0,0,\frac{1}{2})$.

\item If $Q \neq 0$, then Proposition~\ref{Proposition:General2DimReduction} and Lemma~\ref{Lemma:FixedPQClosure} 
provide an $R \in \Q$ such that
\begin{equation*}
        \S(A)=\LL(P,Q,a)\cap \LL(P,Q,b)\cap \LL\big(P,Q,\tfrac{p_A(a)}{b}\big) =\LL(P,Q,R). 
\end{equation*}
\end{enumerate}    
Thus, every dimension-$2$ realizable semigroup is a Lucas semigroup.

\smallskip\noindent$(\Leftarrow)$
Let $\LL(P,Q,R)$ be a Lucas semigroup.
If $R \in \Z$, then $\LL(P,Q,R)=\N$, which has matricial dimension $1$. 
Thus, we assume $R \notin \Z$.
If $PQ = 0$, then Corollary~\ref{Corollary:PQZero} ensures $\LL(P,Q,R)$ is dimension-$2$ realizable. 
Hence, we assume $PQ\neq 0$.   Write $R = N/D$, in which $N,D \in \Z$ with $D \neq 0$ and $\gcd(N,D)=1$,
and define
$A = \tinymatrix{0}{1/D}{ - QD}{P }$.
In the notation of Proposition~\ref{Proposition:General2DimReduction},
    \begin{align*}
    \S(A)&=\LL(P,Q,a)\cap \LL(P,Q,b)\cap \LL\big(P,Q,\tfrac{p_A(a)}{b}\big)\\
    &=\LL(P,Q,0)\cap \LL\big(P,Q,\tfrac{1}{D}\big) \cap \LL(P,Q,QD)\\
    &=\N \cap \LL(P,Q,R) \cap \N=\LL(P,Q,R).
    \end{align*}
Thus, every Lucas semigroup is dimension-$2$ realizable.    
\end{proof}

\section{Applications}\label{Section:Applications}

A natural question is whether every Lucas semigroup a local Lucas semigroup.
The wide variety of local Lucas semigroups described in the previous sections 
suggest that the problem is nontrivial.  However, our classification of local Lucas semigroups
permits us to answer this in the negative.

\begin{theorem}\label{Theorem:NotGlobal}
    There exists a Lucas semigroup that is not a local Lucas semigroup.
\end{theorem}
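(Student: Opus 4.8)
The plan is to exhibit a concrete Lucas semigroup $\LL(P,Q,R)$ with $PQ \neq 0$ whose defining rational $R$ has a denominator divisible by \emph{two} distinct primes, chosen so that the resulting semigroup cannot be matched by any single choice $\LL(P',Q',p^{-r})$. The cleanest route is to work in the regular-prime regime, where Theorem \ref{Theorem:RegPrimeCyclicLocal} tells us each local factor is cyclic. By \eqref{eq:OnlyDemon} and \eqref{eq:Intersection}, I would pick $R = 1/(p_1 p_2)$ for two regular primes $p_1, p_2$ for a fixed sequence $U_n(P,Q)$, so that
\begin{equation*}
\LL(P,Q,R) = \LL(P,Q,p_1^{-1}) \cap \LL(P,Q,p_2^{-1}) = \semigroup{m_1} \cap \semigroup{m_2} = \semigroup{\lcm(m_1,m_2)},
\end{equation*}
where $m_i$ is the generator produced by Theorem \ref{Theorem:RegPrimeCyclicLocal}. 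Thus the \emph{global} semigroup I build is itself cyclic, say $\semigroup{m}$.

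First I would select parameters $P, Q$ and two odd primes $p_1, p_2$ so that $\rho(p_1)$ and $\rho(p_2)$ are coprime (or at least so that $\lcm$ of the two cyclic generators is some specific $m$), making $\LL(P,Q,R) = \semigroup{m}$ explicit. The natural candidate mirrors the Fibonacci-type data already tabulated: for instance $P=3, Q=1$ (the odd-indexed Fibonacci sequence) or $P=1,Q=2$ as in Table \ref{Table:Cyclic}, where many primes appear with $\nu = 1$ and small, controllable ranks of appearance. The key point is that $\semigroup{m}$ \emph{is} realizable as a local Lucas semigroup in many ways, so producing a cyclic global semigroup is not by itself a counterexample. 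Therefore the argument must instead target a global Lucas semigroup whose \emph{shape} is impossible locally.

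The decisive observation is structural: every local Lucas semigroup falls into exactly one of the forms enumerated in Theorems \ref{Theorem:PQZero}, \ref{Theorem:RegPrimeCyclicLocal}, \ref{Theorem:LocalSpecialLucasSGCase1Classifica}, \ref{Theorem:LocalSpecialLucasSGCase2Classifica}, and \ref{Theorem:LocalSpecialLucasSGCase3Classifica}. So the plan is to build a \emph{global} Lucas semigroup $S$ by intersecting two local factors of \emph{different types} so that $S$ matches none of the admissible local forms. The most promising construction intersects a regular-prime cyclic factor $\semigroup{m}$ (from Theorem \ref{Theorem:RegPrimeCyclicLocal}) with a special-prime factor that is a nontrivial union of scaled tails (from Corollary \ref{Corollary:Special2H} or Theorem \ref{Theorem:LocalSpecialLucasSGCase3Classifica}); since both local factors must share the \emph{same} $P,Q$, by \eqref{eq:OnlyDemon} this is just $\LL(P,Q,1/(p_1^{s} p_2^{t}))$ for one special and one regular prime. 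The resulting $S$ is a numerical semigroup (being contained in a special-prime factor) that is not cyclic and not a single scaled tail, hence cannot equal any of the local forms, all of which are either cyclic, ordinary, $\{0\}$, or a specific union dictated by a single prime's valuation data.

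I expect the main obstacle to be verifying genuine \emph{non-representability}: it is not enough that my $S$ looks different from one local factor — I must rule out \emph{all} triples $(P', Q', p^{-r})$, including special primes with various $a, b$ relationships. The way to handle this is to compute an explicit numerical invariant of the candidate $S$ (such as its multiplicity, Frobenius number, set of small elements $\s(S)$, or the gcd condition $\gcd \s(S) \geq 2$) and check that the constraints forced by each local classification theorem are mutually incompatible with $S$. Concretely, I would compute the first several elements of $S$ by hand, identify a specific feature — for example, two small elements whose ratio or spacing is forbidden in every local form — and then argue case-by-case against Theorems \ref{Theorem:PQZero}--\ref{Theorem:LocalSpecialLucasSGCase3Classifica}. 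Producing a small explicit example with a short finite verification, rather than a general impossibility argument, is the cleanest way to close the proof.
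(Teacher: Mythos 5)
Your construction is, in skeleton, the paper's own: fix a single pair $(P,Q)$ admitting both a regular and a special prime, and intersect the two local factors via \eqref{eq:OnlyDemon}. (The paper takes $P=18$, $Q=8$: the regular prime $3$ gives $\LL(18,8,3^{-1})=\semigroup{2}$ by Theorem \ref{Theorem:RegPrimeCyclicLocal}; the special prime $2$ has $a=1$, $b=3>2a$, so $\LL(18,8,2^{-5})=S_6$ by Theorem \ref{Theorem:LocalSpecialLucasSGCase1Classifica}; the intersection is $\semigroup{6,8,10}$.) However, your argument that the intersection $S$ matches no local form contains a genuine error, and it points in exactly the wrong direction: you assert that $S$ ``is a numerical semigroup (being contained in a special-prime factor).'' Containment in a numerical semigroup does not make a semigroup numerical --- that implication runs the other way --- and in your construction $S\subseteq\semigroup{m}$ with $m\geq 2$, so $S$ is \emph{never} numerical. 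This is not a cosmetic slip: were $S$ numerical, you would indeed face the ``main obstacle'' you flag at the end, namely ruling out the elaborate special-prime unions of Corollary \ref{Corollary:Special2H} and Theorem \ref{Theorem:LocalSpecialLucasSGCase3Classifica} case by case. The correct observation --- $S$ is \emph{not} numerical --- eliminates every special-prime local form in one stroke via Theorem \ref{Theorem:SpecialNumerical}, since those all have finite complement. What then remains is only to check that $S$ is not cyclic (Theorem \ref{Theorem:RegPrimeCyclicLocal} produces only $\{0\}$ and cyclic semigroups in the regular case) and matches none of the $PQ=0$ shapes of Theorem \ref{Theorem:PQZero}; for $\semigroup{6,8,10}$ all three checks are immediate, since it omits $2$ and $7$ but contains $6$ and $8$.

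Beyond this, the proposal does not actually close. Theorem \ref{Theorem:NotGlobal} is an existence statement, so a witness is mandatory, and you never commit to parameters: you must exhibit $P,Q$ with a special prime dividing $\gcd(P,Q)$, a regular prime whose local factor is $\semigroup{m}$ with $m\geq 2$, and a special-prime exponent $r$ large enough that the resulting tail cuts properly into $\semigroup{m}$, so the intersection is not cyclic (with $r$ too small the special factor can be all of $S_2$ and the intersection collapses to $\semigroup{m}$ or nearly so). Also, the discriminating invariants you propose ($\s(S)$, the Frobenius number, the condition $\gcd\s(S)\geq 2$) presuppose that $S$ is numerical, which the $S$ you build is not; and in any case $\gcd\s(S)\geq 2$ is \emph{satisfied} by the special-prime local forms (Corollary \ref{Corollary:Special2H}, Theorem \ref{Theorem:LocalSpecialLucasSGCase3Classifica}), so it cannot separate your semigroup from them. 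The clean trichotomy --- not cyclic, not numerical, not of $PQ=0$ shape --- is the route the paper takes, and once you repair the numerical/non-numerical reversal and fix a concrete witness, your plan becomes precisely the paper's proof.
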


\begin{proof}
    Let $P=18$ and $Q=8$. Then
    $\LL(P,Q, 3^{-1})=\semigroup{2}$ 
    (Theorem \ref{Theorem:RegPrimeCyclicLocal}) and
    $\LL(P,Q, 2^{-5})= S_6$ 
     (Theorem \ref{Theorem:LocalSpecialLucasSGCase1Classifica})
     are local Lucas semigroups.  Their intersection, the Lucas semigroup
     $\LL(P,Q, 3^{-1}2^{-5})=\semigroup{6,8,10}$,
     is not local since it is not cyclic (Theorem \ref{Theorem:RegPrimeCyclicLocal}), 
     numerical (Theorem \ref{Theorem:SpecialNumerical}), 
     of a form that occurs in Theorem \ref{Theorem:PQZero}. 
\end{proof}

If $S \subseteq \N$ is a semigroup with $\mdim S = 1$, then $S = \{0\}$ or $\N$ \cite[Prop.~2.1]{NSRM1}.
Consequently, the theorems above provide the following hierarchy:
\begin{align*}
&\{ \text{local Lucas semigroups} \}
\subsetneq \{ \text{Lucas semigroups} \}\\
&\qquad= \{ \text{Dim.-$2$ realizable semigroups}\} \\
&\qquad= \{ \text{Mat.~dim.~$2$ semigroups}\} \cup \{ \text{Mat.~dim.~$1$ semigroups}\} \\
&\qquad= \{ \text{Mat.~dim.~$2$ semigroups}\} \cup \{ \{0\}, \N\}.
\end{align*}

Recall from the introduction the following conjecture, already implicit in \cite{NSRM1}
and circulated informally among researchers in the field.

\begin{conjecture}[Lonely element conjecture]\label{Problem:Plus}
If every nonzero small element of a numerical semigroup is lonely, must the semigroup be
dimension-$2$ realizable?
\end{conjecture}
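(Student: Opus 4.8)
The plan is to answer the question in the negative, by producing an explicit numerical semigroup $S$ all of whose nonzero small elements are lonely but with $\mdim S > 2$. By Theorem~\ref{Theorem:GlobalIff}, $\mdim S \leq 2$ exactly when $S$ is a Lucas semigroup, so it suffices to exhibit a numerical semigroup whose small elements are lonely yet which is \emph{not} a Lucas semigroup. Loneliness of the small elements is by itself consistent with being Lucas: Proposition~\ref{Proposition:TwoTerm} forces every Lucas semigroup to have lonely small elements (two consecutive members would generate a tail, leaving nothing small to their left). Hence the obstruction must be a finer structural feature of the small set $\s(S)$, and the engine for detecting it is Theorem~\ref{Theorem:SmallSet}.

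First I would pin down which Lucas semigroups are numerical. Using \eqref{eq:Intersection}, a Lucas semigroup with fixed $P,Q$ is an intersection of local pieces $\LL(P,Q,p^{-r_p})$. A regular prime contributes a cyclic semigroup $\semigroup{m}$ with $m \geq 2$ (since $\rho \geq 2$; Theorem~\ref{Theorem:RegPrimeCyclicLocal}) or $\{0\}$, and intersecting a cofinite set with such a piece destroys cofiniteness. Thus a nontrivial \emph{numerical} Lucas semigroup is an intersection of \emph{special}-prime local pieces, governed by Theorems~\ref{Theorem:LocalSpecialLucasSGCase1Classifica}, \ref{Theorem:LocalSpecialLucasSGCase2Classifica}, and \ref{Theorem:LocalSpecialLucasSGCase3Classifica}. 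For each such piece the small set is either empty or a structured union of scaled tails $S_i(h_i)$ with highly constrained moduli $h_i$; in particular $\s = \varnothing$ or $\gcd \s \geq 2$ (Corollary~\ref{Corollary:Special2H} and Theorem~\ref{Theorem:LocalSpecialLucasSGCase3Classifica}).

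Next I would convert this into the necessary condition recorded in Theorem~\ref{Theorem:SmallSet}: the small set of any numerical Lucas semigroup must decompose according to these admissible local patterns. I would then search numerical semigroups whose small elements are all lonely for one whose small set cannot be assembled from any admissible family of special-prime pieces, and verify three things directly: (i) $S$ is a numerical semigroup (closed under addition, finite complement); (ii) every element of $\s(S)$ is lonely; and (iii) $\s(S)$ is incompatible with the structure forced by Theorem~\ref{Theorem:SmallSet}, so $S$ is not a Lucas semigroup and $\mdim S > 2$ by Theorem~\ref{Theorem:GlobalIff}.

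The hard part will be the reduction in the second and third paragraphs. Combining the local classifications under intersection is delicate because an element may be small in the global semigroup while lying \emph{above} the Frobenius number of an individual local factor; it then belongs to that factor automatically, regardless of its residue class. Consequently $\s(S)$ of an intersection need not inherit the clean divisibility of the separate factors, so a crude ``$\gcd \s \geq 2$'' test is \emph{not} sufficient and the sharper criterion of Theorem~\ref{Theorem:SmallSet} is genuinely needed. Producing a concrete lonely small set that provably eludes \emph{every} admissible combination of special-prime pieces, and not merely the single-prime ones, is the crux of the argument.
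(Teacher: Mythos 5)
Your overall architecture matches the paper's: answer the question in the negative by exhibiting a numerical semigroup whose nonzero small elements are lonely but which fails Theorem~\ref{Theorem:SmallSet}, hence is not a Lucas semigroup and, by Theorem~\ref{Theorem:GlobalIff}, not dimension-$2$ realizable. But the proposal has two genuine defects. First, it never produces a counterexample: the second and third paragraphs describe a search you \emph{would} perform, and your final paragraph concedes that producing a concrete lonely small set eluding the classification ``is the crux of the argument.'' A plan with the crux deferred is not a proof. Second, and more substantively, your final paragraph misreads Theorem~\ref{Theorem:SmallSet}: that theorem \emph{is} the ``crude'' gcd test --- its statement is that every \emph{numerical Lucas semigroup} $S$ (global, not merely local) satisfies $\s(S)=\varnothing$ or $\gcd \s(S)\geq 2$ --- so the gcd test is already sufficient, and no sharper criterion about ``admissible combinations of special-prime pieces'' is needed. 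Your worry about intersections (a global small element sitting above a local factor's Frobenius number) dissolves on inspection: for $S=\bigcap_i S_i$ one has $g(S)=\max_i g(S_i)$, so choosing a factor $S_j$ with $g(S_j)$ maximal gives $\s(S)\subseteq \s(S_j)$, and any common divisor of all of $\s(S_j)$ divides every element of the subset; hence $\s(S)=\varnothing$ or $\gcd \s(S)\geq \gcd \s(S_j)\geq 2$. This is exactly why the paper's proof of Theorem~\ref{Theorem:SmallSet} can reduce to the local case in one line, and why the divisibility \emph{does} pass to intersections, contrary to your claim.

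With the gcd test in hand, the counterexample is immediate, as in the paper: $S=\semigroup{5,7,16,18}=\{0,5,7,10,12,14,15,16,\ldots\}$ has Frobenius number $13$; every element of $\s(S)=\{5,7,10,12\}$ is lonely since $4,6,8,9,11,13\notin S$; and $\gcd\{5,7,10,12\}=1$ with $\s(S)\neq\varnothing$, so $S$ is not a numerical Lucas semigroup, whence $\mdim S>2$ by Theorem~\ref{Theorem:GlobalIff}. Any $\PPM$ avoiding numerical semigroup with two coprime small elements would serve equally well, so the search you postponed is easy --- but as written, your proposal both lacks the witness and overcomplicates the criterion that certifies it.
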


A numerical semigroup $S$ is \emph{$\PPM$ avoiding} if $n,n+1 \in S$ implies $n+2 \in S$; in this case, induction yields $S_n \subseteq S$.  A $\PPM$ avoiding numerical semigroup has $\mdim S \geq 2$ \cite[Thm.~5.1]{NSRM1}.
For example, $\semigroup{3,5,7} = \{0,3,5,6,7,8,\ldots\}$ is $\PPM$ avoiding and has matricial dimension $2$, while
$\semigroup{3,4} = \{0,3,4,6,7,8,\ldots\}$ is not $\PPM$ avoiding and $\mdim \semigroup{3,4} = 3$ \cite[Table 1]{NSRM1}.
The lonely element conjecture is true if and only if every $\PPM$ avoiding semigroup is dimension-$2$ realizable.

Prior to this paper, the semigroups of matricial dimension $2$ were not classified.  When presented with a $\PPM$ avoiding semigroup, we had no general method to show its matricial dimension was greater than $2$.  The next result remedies this.

\begin{theorem}\label{Theorem:SmallSet}
Let $S$ be a numerical Lucas semigroup.  Then $\s(S) = \varnothing$ or $\gcd \s(S) \geq 2$.
If neither condition holds, then $S$ is not dimension-$2$ realizable.
\end{theorem}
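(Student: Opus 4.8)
The plan is to prove the dichotomy first, since the second assertion is then immediate: by Theorem~\ref{Theorem:GlobalIff} a semigroup is dimension-$2$ realizable exactly when it is a Lucas semigroup, so a numerical semigroup $S$ with $\s(S)\neq\varnothing$ and $\gcd\s(S)=1$ cannot be Lucas (by the contrapositive of the dichotomy) and hence is not dimension-$2$ realizable. Everything therefore reduces to showing that a numerical Lucas semigroup $S=\LL(P,Q,R)$ satisfies $\s(S)=\varnothing$ or $\gcd\s(S)\geq 2$. If $R\in\Z$ then $S=\N$ and $\s(S)=\varnothing$, so I may assume $R=N/D$ in lowest terms with $D\geq 2$. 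Using \eqref{eq:Intersection} and \eqref{eq:OnlyDemon} I would write $S=\bigcap_{p\mid D}\LL(P,Q,p^{-\nu_p(D)})$ as a finite intersection of local Lucas semigroups $S^{(p)}$, each of which excludes $1$ and so is a proper subset of $\N$.

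First I would record that the intersection being numerical forces each factor to be numerical: since $\N\setminus S=\bigcup_{p\mid D}(\N\setminus S^{(p)})$, the left-hand side is finite iff each $\N\setminus S^{(p)}$ is finite. When $PQ=0$, Theorem~\ref{Theorem:PQZero} shows every numerical local factor is a tail $S_m$ (with $\s=\varnothing$) or of the form $\semigroup{2,m}$ (whose small elements are all even), and one may alternatively just invoke Corollary~\ref{Corollary:PQZero}. When $PQ\neq 0$, Theorem~\ref{Theorem:RegPrimeCyclicLocal} shows that a regular prime $p\mid D$ would make $S^{(p)}$ equal to $\{0\}$ or a cyclic $\semigroup{m}$ with $m\geq 2$, neither of which is numerical; hence every $p\mid D$ is special. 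For special primes, Theorem~\ref{Theorem:SpecialNumerical} guarantees each $S^{(p)}$ is numerical, while Theorem~\ref{Theorem:LocalSpecialLucasSGCase1Classifica}, Corollary~\ref{Corollary:Special2H}, and Theorem~\ref{Theorem:LocalSpecialLucasSGCase3Classifica} establish that each already obeys $\s(S^{(p)})=\varnothing$ or $\gcd\s(S^{(p)})\geq 2$. Thus in every case each factor is a numerical local Lucas semigroup satisfying the desired dichotomy.

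The remaining and main obstacle is to pass from the factors to the intersection, since the various factors may impose unrelated common divisors on their small sets (a $\rho'$, a $3$, or merely evenness), and such a divisor need not obviously persist in the intersection. The key step that resolves this is the observation that for a finite intersection of numerical semigroups the Frobenius number is the maximum of the individual Frobenius numbers: indeed $g(S)\geq g(S^{(p)})$ because $S\subseteq S^{(p)}$, while every integer $n>\max_p g(S^{(p)})$ lies in each numerical $S^{(p)}$ and hence in $S$, giving $g(S)=\max_p g(S^{(p)})$. Choosing $p_0$ with $g(S)=g(S^{(p_0)})$, any $n\in\s(S)$ satisfies $0\neq n\in S\subseteq S^{(p_0)}$ and $n<g(S)=g(S^{(p_0)})$, so $n\in\s(S^{(p_0)})$; that is, $\s(S)\subseteq\s(S^{(p_0)})$. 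If $\s(S)=\varnothing$ we are done, and otherwise $\s(S^{(p_0)})\neq\varnothing$, so $d_0:=\gcd\s(S^{(p_0)})\geq 2$ divides every element of $\s(S^{(p_0)})\supseteq\s(S)$, whence $\gcd\s(S)\geq d_0\geq 2$. (Proposition~\ref{Proposition:TwoTerm}, which makes the small elements of any Lucas semigroup lonely, is consistent with this conclusion but, notably, is not strong enough to prove it on its own, which is why the classification of the local factors is essential.)
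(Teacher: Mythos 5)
Your proof is correct, and it follows the same overall route as the paper: localize via \eqref{eq:Intersection}, observe that every local factor of a numerical Lucas semigroup must itself be numerical, and then invoke the classification results (Theorem \ref{Theorem:LocalSpecialLucasSGCase1Classifica}, Corollary \ref{Corollary:Special2H}, Theorem \ref{Theorem:LocalSpecialLucasSGCase3Classifica}) to get the dichotomy for each factor. The substantive difference is the reduction step. The paper dispatches it in one sentence (``it suffices to prove the result for a local numerical Lucas semigroup'') without explaining why the gcd condition survives the intersection; your lemma that $g(S)=\max_p g(S^{(p)})$ for a finite intersection of numerical semigroups, which yields $\s(S)\subseteq\s(S^{(p_0)})$ for a factor $p_0$ attaining the maximum, is exactly the missing argument, and the choice of $p_0$ matters, since $\s(S)\subseteq\s(S^{(p)})$ can fail for the other factors. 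You also handle $PQ=0$ explicitly, whereas the paper's proof asserts that a local Lucas semigroup is numerical if and only if $PQ\neq 0$ and $p$ is special; that biconditional is inaccurate, since by Theorem \ref{Theorem:PQZero} the tails $S_m$ and the semigroups $\semigroup{2,m}$ with $m$ odd are numerical local Lucas semigroups with $PQ=0$ --- harmless for the theorem, because (as you note) these satisfy the dichotomy trivially, but your version closes the gap. Your elimination of regular primes dividing the denominator (each factor must be numerical because $\N\setminus S=\bigcup_p(\N\setminus S^{(p)})$, and Theorem \ref{Theorem:RegPrimeCyclicLocal} produces only $\{0\}$ or proper cyclic semigroups for $r\geq 1$) is likewise sound, as is the opening use of Theorem \ref{Theorem:GlobalIff} for the non-realizability claim. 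In short: same strategy as the paper, but your write-up supplies the gluing lemma and the $PQ=0$ case that the paper's proof elides, making it the more complete argument.
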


\begin{proof}
Since a numerical Lucas semigroup is the intersection of local numerical Lucas semigroups, 
it suffices to prove the result for a local numerical Lucas semigroup.  
In light of Theorems \ref{Theorem:PQZero}, \ref{Theorem:RegPrimeCyclicLocal}, and
\ref{Theorem:SpecialNumerical}, a local Lucas semigroup is numerical if and only if $PQ \neq 0$ and $p$ is special.
This breaks down into the three cases from Subsections \ref{Subsection:Special1}, \ref{Subsection:Special2}, and \ref{Subsection:Special3}.
In Case 1, Theorem \ref{Theorem:LocalSpecialLucasSGCase1Classifica} ensures that $s(\S) = \varnothing$.
Corollary \ref{Corollary:Special2H} and Theorem \ref{Theorem:LocalSpecialLucasSGCase3Classifica}
handle Cases 2 and 3, respectively.
\end{proof}

For example, 
\begin{equation}\label{eq:5716}
S= \semigroup{5, 7, 16, 18} = \{0, 5, 7, 10, 12, 14, 15, 16, 17, 18,\ldots\}
\end{equation}
has $\s(S) = \{5,7,10,12\} \neq \varnothing$ and $\gcd \s(S) = 1 < 2$, so
Theorem \ref{Theorem:SmallSet} ensures it is not dimension-$2$ realizable.  This proves the next result:

\begin{theorem}\label{Theorem:Plus}
The lonely element conjecture is false:
there exists a $\PPM$ avoiding semigroup with matricial dimension greater than $2$.  
\end{theorem}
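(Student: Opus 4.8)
The plan is to exhibit a single explicit numerical semigroup that is $\PPM$ avoiding yet provably not dimension-$2$ realizable, and the candidate is already written down for us in \eqref{eq:5716}, namely $S = \semigroup{5,7,16,18}$. The proof therefore splits into two independent tasks: first verify that $S$ is $\PPM$ avoiding, and second verify that $\mdim S > 2$. For the second task I would invoke Theorem \ref{Theorem:SmallSet}, which is precisely the tool engineered for this purpose: if $S$ is a numerical Lucas semigroup then $\s(S) = \varnothing$ or $\gcd \s(S) \geq 2$, and the contrapositive says that a numerical semigroup with $\s(S) \neq \varnothing$ and $\gcd \s(S) = 1$ cannot be a Lucas semigroup, hence (by Theorem \ref{Theorem:GlobalIff}) cannot be dimension-$2$ realizable.

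First I would pin down the small elements. Having listed $S = \{0,5,7,10,12,14,15,16,17,18,\ldots\}$, I read off the Frobenius number as $g(S) = 13$ (the largest gap, since $14,15,16,17,18$ and everything beyond lies in $S$), so by \eqref{eq:SmallS} the nonzero small elements are $\s(S) = \{5,7,10,12\}$. The key numerical fact is then simply $\gcd\{5,7,10,12\} = 1$, which holds because $5$ and $7$ are coprime. Since $\s(S) \neq \varnothing$ and $\gcd \s(S) = 1 < 2$, Theorem \ref{Theorem:SmallSet} applies and forces $S$ to not be dimension-$2$ realizable; equivalently $\mdim S \geq 3 > 2$.

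Second I would confirm the $\PPM$ avoiding property, i.e.\ that $n, n+1 \in S \Rightarrow n+2 \in S$. From the generators $5,7,16,18$ one checks that $S$ contains the tail $S_{14} = \{0\}\cup\{14,15,16,\ldots\}$ once enough elements accumulate; concretely, once two consecutive integers both lie in $S$ the induction of the definition guarantees the full tail. The only place a consecutive pair can first appear is around $14,15$ (coming from $7+7=14$ and $7+8$—wait, rather $15 = 5+10 = 5+5+5$ and $14 = 7+7$), and since $14,15,16,\ldots \in S$ the avoidance condition is satisfied trivially beyond that point, while below it no consecutive pair with a missing successor occurs. This is a finite, mechanical check over the elements up to $g(S)+2$.

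The main obstacle is essentially nonexistent at the level of logic, since Theorem \ref{Theorem:SmallSet} does all the heavy lifting; the real work was done earlier in classifying local Lucas semigroups. The only thing requiring care is the bookkeeping: correctly computing $g(S)$ and $\s(S)$ from the generators, and correctly verifying the $\PPM$ avoiding condition as a genuine finite check rather than asserting it. I would present both verifications explicitly so the counterexample is self-contained, then conclude that $S = \semigroup{5,7,16,18}$ witnesses the falsity of the lonely element conjecture.
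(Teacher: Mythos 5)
Your proposal is correct and takes essentially the same route as the paper: the identical counterexample $S=\semigroup{5,7,16,18}$ from \eqref{eq:5716}, with $g(S)=13$, $\s(S)=\{5,7,10,12\}\neq\varnothing$, $\gcd\s(S)=1$, and Theorem \ref{Theorem:SmallSet} (through the classification in Theorem \ref{Theorem:GlobalIff}) ruling out dimension-$2$ realizability, plus the finite check of the $\PPM$ avoiding property. One small wording caution: your aside that a consecutive pair in $S$ ``guarantees the full tail'' by induction presupposes the very $\PPM$ property being verified, but your actual argument (the five consecutive elements $14,15,16,17,18$ give $S_{14}\subseteq S$ by adding multiples of $5$, and no consecutive pair occurs below $14$) is sound and matches what the paper leaves implicit.
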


\begin{proof}
The semigroup $S$ in  \eqref{eq:5716} is $\PPM$ avoiding and $\mdim S >2$.
\end{proof}

\section{Further directions}\label{Section:Open}
We conclude with several questions spurred by the work above.   The first question stems from \cite{NSRM1,NSRM2},
which introduced the concept of matricial dimension.

\begin{problem}\label{Problem:A}
Find a method to compute the matricial dimension of a semigroup.
\end{problem}

In particular, what is $\mdim \semigroup{5, 7, 16, 18}$?  The proof of Theorem \ref{Theorem:Plus}
ensures that it is greater than $2$.  Since the matricial dimension of a numerical semigroup cannot exceed its multiplicity
\cite[Cor.~3.6]{NSRM2}, we are left with
\begin{equation*}
3 \leq \mdim \semigroup{5, 7, 16, 18} \leq 5.
\end{equation*}
A more modest goal than Problem \ref{Problem:A} is:

\begin{problem}
Find $\mdim \semigroup{5, 7, 16, 18}$.
\end{problem}

The answer is at least $3$, but it could be as high as $5$.  

Two questions directly inspired by the work above are the following:

\begin{problem}
Characterize the $\PPM$ avoiding semigroups.
\end{problem}

\begin{problem}
Characterize the dimension-$3$ realizable semigroups.
\end{problem}

The methods used in this paper would require significant effort to generalize.
First, the Cayley--Hamilton approach employed in Proposition \ref{Proposition:General2DimReduction} and 
Theorem \ref{Theorem:GlobalIff} may prove more cumbersome, although it might still be possible to characterize
dimension-$3$ realizable semigroups via the divisibility properties of three-term recurrences.  However, this would
require knowledge about the $p$-adic valuations of more complicated relatives of the Lucas numbers.
To our knowledge, a complete exploration of that topic, analogous to \cite{BallotPaper,BallotBook}, has not been undertaken. 

One can consider membership patterns beyond $\PPM$, such as $\scriptstyle +++-$.  However, their consideration seems premature.  In another direction, a question about Lucas sequences that arose in our investigations is:

\begin{problem}
For each prime $p$ and $n,k \geq 1$, is there a Lucas sequence such that $\rho(p)=n$ and $\nu_p(U_{\rho(p)})=k$?
\end{problem}

A positive answer would permit the construction of all dimension-$2$ realizable semigroups.
At present, the bottleneck appears to be number-theoretic.


\bibliography{NSRM3}
\bibliographystyle{amsplain}

\end{document}